\newtheorem{theorem}{Theorem}
\newtheorem{lemma}{Lemma}
\newtheorem{remark}{Remark}
\begin{document}

% \maketitle
\begin{frontmatter}
\title{Multiscale Partially Explicit Splitting with Mass Lumping for High-Contrast Wave Equations}
\author[WL]{Shu Fan Li}
% \ead{@cityu.edu.hk}
\author[WL]{Wing Tat Leung \corref{cor1}}
\ead{wtleung27@cityu.edu.hk}

\cortext[cor1]{Corresponding author}
\address[WL]{Department of Mathematics, City University of Hong Kong.}

%\fntext[<label>]{<author footnote text>}Furthermore,
\begin{abstract}
    In this paper, contrast-independent partially explicit time discretization for wave equations in heterogeneous high-contrast media via mass lumping is concerned. 
    By employing a mass lumping scheme to diagonalize the mass matrix, the matrix inversion procedures can be avoided, thereby significantly enhancing computational efficiency especially in the explicit part. In addition, after decoupling the resulting system, higher order time discretization techniques can be applied to get better accuracy within the same time step size.
    Furthermore, the spatial space is divided into two components: contrast-dependent (“fast”) and contrast-independent (“slow”) subspaces. Using this decomposition, our objective is to introduce an appropriate time splitting method that ensures stability and guarantees contrast-independent discretization under suitable conditions. 
    We analyze the stability and convergence of the proposed algorithm. In particular, we discuss the second order central difference and higher order Runge–Kutta method for a wave equation. Several numerical examples are presented to confirm our theoretical results and to demonstrate that our proposed algorithm achieves high accuracy while reducing computational costs for high-contrast problems.
\par\textbf{Keywords: } Multiscale methods; Wave equation; Temporal splitting methods; High-contrast media
\end{abstract}
\end{frontmatter}
\section{Introduction}
Wave phenomena can be observed across diverse scientific and engineering disciplines, such as medical imaging, seismic inversion, and multiphase composite materials. Many of them are modeled using wave equations with highly oscillatory coefficients 
 \cite{masson2010finite,saenger2007finite,virieux1986p,cheung2021explicit}. Therein, material properties often exhibit multiscale characteristics and high-contrast, which typically include some features with distinct properties in narrow strips or channels. These heterogeneities bring challenges to numerical simulations. To address these issues, various types of coarse grid models have been developed. 

Multiscale methods are frequently utilized in various applications to reduce computational complexity and enable problem-solving on coarser grids. Numerous multiscale techniques have been developed, with some approaches formulating coarse grid problems using effective media properties derived from homogenization theory \cite{wu2002analysis,chung2018constraint,bourgeat1984homogenized,efendiev2003numerical}. For example, composition rules for constructing the finite element basis are presented in \cite{allaire2005multiscale}. However, these approaches often rely on pre-formulated assumptions \cite{durlofsky1991numerical,bakhvalov2012homogenisation}.
Alternatively, some approaches focus on constructing multiscale basis functions and formulating coarse grid equations. 
One prominent example is the multiscale finite element methods (MsFEM) \cite{jenny2003multi,chen2002multiscale,hou1999convergence}, with several variations of this method described in detail in \cite{efendiev2009multiscale}.
% Beyond MsFEM, generalized multiscale finite element methods (GMsFEM) are introduced in \cite{efendiev2013generalized}. It can be used for scalar wave propagations \cite{chung2014generalized}, flow equations \cite{chung2014adaptive}, and goal-oriented adaptive strategies \cite{chung2016goal}. 
Other widely used multiscale approaches include generalized multiscale finite element methods (GMsFEM) \cite{efendiev2013generalized,chung2014adaptive,chung2016goal},
constraint energy minimizing GMsFEM (CEM-GMsFEM) \cite{chung2018constraint,chung2022contrast,cheung2018constraint},
nonlocal multi-continua approaches (NLMC) \cite{chung2018non,leung2022multirate,wang2023adaptive}, and localized orthogonal decomposition (LOD) \cite{henning2014localized,zhang2022combined}. 

Therein, in the presence of high-contrast, multiple basis functions are required within coarse elements. For instance, GMsFEM was proposed to extract local dominant modes through local spectral problems on coarse grids and represent the multiscale features by multiscale basis functions or continua \cite{efendiev2013generalized}. However, it is difficult to achieve mesh-dependent convergence without oversampling and a large number of basis functions.
CEM-GMsFEM was proposed to resolve this difficulty, which consists two main steps. First, it solves local spectral problems to construct auxiliary basis functions. Subsequently, constraint energy minimization problems are solved to derive localized basis functions. By combining eigenfunctions with energy-minimizing properties, the resulting basis functions exhibit exponential decay away from the target coarse element, allowing for computing basis functions locally. Additionally, the convergence depends on the coarse grid size when the oversampling domain is carefully chosen.

Our approach is based on some fundamental splitting algorithms \cite{chung2022contrast,chung2021contrast,hundsdorfer2013numerical,marchuk1990splitting}. These algorithms are initially constructed to split various processes in some coupled systems. Recently, numerous methods have been introduced to address spatial heterogeneities. Therein, explicit schemes have the advantage of rapid time marching \cite{virieux1984sh,gyrya2019explicit}, whereas implicit schemes typically provide unconditionally stability but at a higher computational cost \cite{alexander1977diagonally,burrage1979stability}.
To balance stability and efficiency, it is essential to develop a method that mitigates the contrast-dependent time step constraints while maintaining computational efficiency.
We combine the temporal splitting algorithms and spatial multiscale methods. The spatial space is divided into two components and subspaces are used in the temporal splitting. As a result, smaller systems are inverted at each time step, reducing computational cost. More precisely, some subspaces are treated explicitly in time, while others are treated implicitly. However, the proposed approach is still coupled through the mass matrix.

To address this issue, we further introduce a mass lumping scheme \cite{boscarino2013implicit,hu2025uniform,hinton1976note}. 
Given a mass lumping scheme via the diagonalization technique of the mass matrix, matrix inversion procedures can be avoided, significantly enhancing computational efficiency, especially in the explicit part. Specifically, once the mass matrix is lumped and diagonalized, the system becomes easier to solve without solving complex linear systems at each time step. In addition, after decoupling the resulting system, higher order time discretization techniques can be applied and get better accuracy under the same time steps.  
% For these reasons, the overall approach is more efficient.

% In the paper, we present numerical results for three heterogeneous fields. In these cases, each field contains channels with high values. We select auxi

In the paper, we present numerical results for wave simulations in various heterogeneous media with differing source terms via mass lumping. We investigate and compare several results, showing that the stability condition is independent of contrast. Further, we demonstrate that after applying mass lumping, the error remains comparable to methods without mass lumping, but with significantly enhanced computational efficiency. Our numerical results show that the error decays as the number of oversampling layers grows, the number of eigenfunctions in each coarse element grows, the coarse grid size decreases, the time step size drops, and is also influenced by the choice of time discretization scheme.
% These results highlight the effectiveness of our approach in balancing accuracy and computational efficiency, even in highly heterogeneous and high-contrast media.
% To capture the channelized features, we select one auxiliary basis function that encompasses all such characteristics, while the remaining auxiliary basis functions (eigenvectors) are chosen to be contrast-independent.

The main findings in our paper are as follows
\begin{itemize}
  \item Partially explicit temporal splitting algorithms with spatial multiscale methods are designed for high-contrast wave equations. The stability and convergence conditions for the proposed method are presented, which is proven to be contrast-independent.
  \item Given a mass lumping scheme to decouple the system, the matrix inversion procedures are avoided, which can significantly reduce computational costs and use the higher order time discretization scheme separately.
  % \item We design multiscale spaces using CEM-GMsFEM to achieve a stability on the coarse grid.
  \item Piecewise constants in each coarse grid and matrix of $V_{\text{aux,1}}$ are employed for an upscaling system, identifying appropriate local problems to represent high-contrast areas.
  \item The numerical results can achieve high accuracy and the expected convergence rate, strongly confirming our theoretical findings.
\end{itemize}
% Summary of Contributions
%%briefly describe what each section contributes
The paper is organized as follows. Some notions are provided in Section \ref{002}. The details of the proposed method, including the construction of the global spaces, local spaces, the mass lumping scheme, and the corresponding systems are derived in Section \ref{003}. The stability and the convergence of the method will be analyzed in Section \ref{004}. In Section \ref{005}, we briefly introduce a partially explicit Runge-Kutta scheme for our system, and in Section \ref{006}, numerical simulations are given for certification. The conclusions are presented in Section \ref{007}.

\section{Preliminaries}\label{002}
We consider the second order wave equation in a heterogeneous domain
\begin{equation}\label{0.1}
    \frac{\partial^2 u}{\partial t^2}=\text{div}(\kappa\nabla u)+f~\mathrm{in}~[0,T]\times\Omega,
\end{equation}
where $f(x,t)$ is a given source term, $\Omega\subset\mathbb{R}^{d}$. $\kappa$ is a high-contrast with $\kappa_0 \leq \kappa(x)\leq\kappa_1$ where $\cfrac{\kappa_1}{\kappa_0}\gg 1$, and is a highly heterogeneous multiscale field. The above equation is subjected to the homogeneous Dirichlet boundary condition $u=0$ on $[0,T]\times \partial\Omega $ and the initial conditions $u(x,0)=u_0(x)$ and $u_t(x,0)=v_0(x)$ in $\Omega$.
The proposed method can be easily extended to other types of boundary conditions, for example, absorbing boundary conditions (ABC). 
Here, some notions are introduced. Denote $\mathcal{T}^H$ as a coarse-grid partition of $\Omega$ into finite elements, which does not necessarily resolve any multiscale features. The generic element $K$ in this partition $\mathcal{T}^H$ is referred to as a coarse element, and $H>0$ is the coarse grid size. Let the number of coarse grid nodes be $N_c$ and the number of coarse elements by $N$. Assume that each coarse element is partitioned into a connected union of fine grid blocks, forming the partition $\mathcal{T}^h$, where $h>0$ is called the fine grid size. It is assumed that the fine grid is sufficiently fine to resolve the solution. $V_h$ denotes the space spanned by the fine grid basis functions defined later. For the $i$-th coarse element $K_i$, let $V_h(K_i)$ be the conforming bilinear elements defined on the fine grid $\mathcal{T}^h$ within $K_i$. An illustration of the fine grid, coarse grid, and oversampling domain is shown in Figure \ref{fig:grid2}.

We write the problem (\ref{0.1}) in the fine grid weak formulation: find the solution $u_h(\cdot,t)\in V_h$ satisfying
% \begin{equation}\label{0.2}
%  \begin{cases}
%    \left(\frac{\partial^2 u_h}{\partial t^2},w\right)_{L^2(\Omega)}+a(u_h,w)=(f,w)_{L^2(\Omega)}~\mathrm{in}~[0,T]\times\Omega,\\
  
%     u(x,0)=u_0(x),  \\u_t(x,0)=v_0(x),\\
% u=0 ~\text{on}~[0,T]\times \partial\Omega.
%   \end{cases}
% \end{equation}
\begin{equation}\label{0.2}
    \left(\frac{\partial^2 u_h}{\partial t^2},w\right)_{L^2(\Omega)}+a(u_h,w)=(f,w)_{L^2(\Omega)}~\mathrm{in}~[0,T]\times\Omega,
\end{equation}
where $u=0$ on $[0,T]\times \partial\Omega $ and the initial conditions $u(x,0)=u_h(0)$ and $u_t(x,0)=v_h(0)$ in $\Omega$.
Therein, the bilinear form $a(\cdot,\cdot)$ is given by 
$$a(u,v)=\int_{\Omega}\kappa\nabla u\cdot\nabla v,$$ and defines the energy norm $\|u\|_a=a(u,u)^{\frac{1}{2}}$.
The initial data is $L^2$ projected onto the finite element space $V_h$.
For the coarse grid space, we seek an approximation $u_H(t,\cdot)\in V_H$, where $V_H$ is a finite dimensional space ($H$ is a spatial mesh size), such that
\begin{equation}\label{0.21}
    \left(\frac{\partial^2 u_H}{\partial t^2},w\right)_{L^2(\Omega)}+a(u_H,w)=(f,w)_{L^2(\Omega)}~\mathrm{in}~[0,T]\times\Omega,
\end{equation}
where $(u_H(\cdot,0),v)=(u_0,v)$, $(u_{H,t}(\cdot,0),v)=(v_0,v)$ for all $v\in V_H$, .
% \begin{figure}[htbp]
% \centering
% \begin{tikzpicture}[scale=0.5]
%   \draw[step=0.5cm,gray,very thin] (-6,-6) grid (6,6);
%   \draw[step=1cm,red,thin] (-6.5,-6.5) grid (6.5,6.5);
%   \draw[step=2cm,blue,thick] (-6.5,-6.5) grid (6.5,6.5);
%         \fill[olive!80!black, fill opacity=0.5] (-2,-2) rectangle (4,4); 
%         \fill[yellow!90!orange, fill opacity=0.9] (0,0) rectangle (2,2); 
%   % \draw[fill=yellow] (0,0) rectangle (2,2);
%   \draw[ultra thick] (-2,-2) rectangle (4,4);

%   \draw[fill=red] (3,3) rectangle (3.5,3.5);
%   \node[font=\bfseries\tiny] at (3.98,4.4) {fine grid};
%   \node[font=\bfseries] at (-3.15,-3.15) {$K_{i,1}$};
%   \node[font=\bfseries] at (1,1) {$K_i$};

%   \draw[->,thick] (3.9,4.2) -- (3.6,3.6);
%   \draw[->,thick] (-3,-3) -- (-2.1,-2.1);
  
% \end{tikzpicture}
% \caption{Illustration of the fine grid, coarse grid $K_i$ and oversampling domain $K_{i,1}$.}
% \label{fig:grid2}
% \end{figure}
\begin{figure}[htbp]
\centering
\begin{tikzpicture}[scale=0.5]
  \draw[step=0.5cm,gray,very thin] (-6,-6) grid (6,6);
  \draw[step=1cm,red,thin] (-6.5,-6.5) grid (6.5,6.5);
  \draw[step=2cm,blue,thick] (-6.5,-6.5) grid (6.5,6.5);
        \fill[olive!80!black, fill opacity=0.5] (-2,-2) rectangle (4,4); 
        \fill[yellow!90!orange, fill opacity=0.9] (0,0) rectangle (2,2); 
  % \draw[fill=yellow] (0,0) rectangle (2,2);
  \draw[ultra thick] (-2,-2) rectangle (4,4);

  \draw[fill=red] (3,3) rectangle (3.5,3.5);
  \node[font=\fontsize{8}{9.6}\selectfont\bfseries] at (3.98,4.4) {fine grid};
  \node[font=\fontsize{10}{12}\selectfont\bfseries] at (-3.15,-3.15) {$K_{i,1}$};
  \node[font=\fontsize{10}{12}\selectfont\bfseries] at (1,1) {$K_i$};

  \draw[->,thick] (3.9,4.2) -- (3.6,3.6);
  \draw[->,thick] (-3,-3) -- (-2.1,-2.1);
  
\end{tikzpicture}
\caption{Illustration of the fine grid, coarse grid $K_i$ and oversampling domain $K_{i,1}$.}
\label{fig:grid2}
\end{figure}
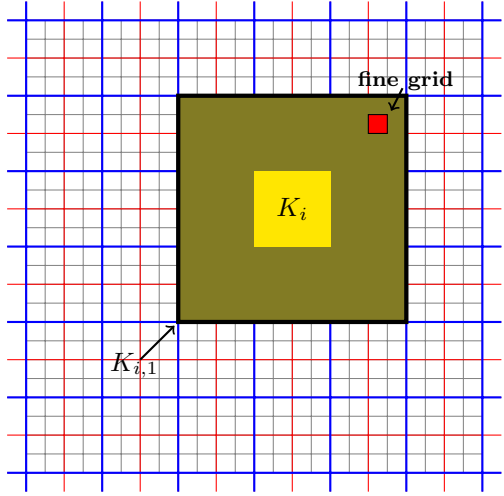

\section{The construction of the multiscale spaces}\label{003}
% \section{Multiscale model reduction by CEM-GMsFEM}\label{003}
In this section, we discuss the construction of multiscale finite element spaces. The process is divided into two main parts. First, we construct the auxiliary spaces $V_{\text{aux,1}},V_{\text{aux,2}}$. Then, we utilize these auxiliary spaces to construct the multiscale spaces $V_{H,1},V_{H,2}$.  A detailed discussion on the construction of auxiliary multiscale basis functions, as well as global and local multiscale basis functions, is presented in the context of the wave equation.
\subsection{The construction of the auxiliary spaces}
To construct the auxiliary basis functions, we employ piecewise constant functions for $V_{\text{aux,1}}$ and adopt the GMsFEM framework for $V_{\text{aux,2}}$.
By utilizing spatial splitting based on a multiscale decomposition of the approximation space, the first spatial space is designed to account for spatial features. 
% Thus we let  contains high-contrast areas associated with fast time scales.
Thus, the auxiliary basis functions for $V_{\text{aux},1}$ are constructed as follows. For each coarse element $K_{i}\in \mathcal{T}^H$, we consider
$$\psi_j^{(i)}=I_{K_j^{(i)}}(x),$$
where $I_{K_j^{(i)}}$ is the characteristic function of $K_j^{(i)}\subset K_i$. Denote $K_1^{(i)}$ as the regions with low wave speed and $K_2^{(i)}$ as the regions with high wave speed. In this paper, we consider 
$$K_1^{(i)}=\{x\in K_i|\kappa(x)\leq \kappa_{\text{cutoff}}\},~K_2^{(i)}=\{x\in K_i|\kappa(x)> \kappa_{\text{cutoff}}\},$$ where $\kappa_{\text{cutoff}}$ is a constant defining the cutoff of the high wave speed and low wave speed regions. The number of basis functions in every coarse element is 1 or 2, denoted as $l_i$.

Next, we solve the following eigenvalue problem on every coarse element $K_i$ to obtain the auxiliary basis functions for $V_{\text{aux},2}$. Precisely, we find $(\xi_j^{(i)},\gamma_j^{(i)})\in V(K_i)\cap \tilde{V}\times \mathbb{R}$, such that
$$\int_{\omega_i}\kappa\nabla\xi_j^{(i)}\cdot\nabla v=\gamma_j^{(i)}\int_{\omega_i}\xi_j^{(i)}v~\forall v\in V(K_i)\cap\tilde{V},$$
where $\tilde{V}=\{v\in V|(v,\psi_j^{(i)})=0~\forall i,j\}$. The eigenvalues $\gamma_j^{(i)}$ are arranged in ascending order with respect to $j$, and we use the first $L_i$ eigenfunctions to construct $V_{\text{aux,2}}$.
Thus, the auxiliary spaces $V_{\text{aux},1}$ and $V_{\text{aux},2}$ are defined as 
$$V_{\text{aux},1}=\text{span}_{i,j}\Big\{\psi_j^{(i)}:1\leq j\leq l_i,1\leq i\leq N\Big\},~V_{\text{aux},2}=\text{span}_{i,j}\Big\{\xi_j^{(i)}:1\leq j\leq L_i,1\leq i\leq N\Big\}.$$
The local $L^2(K_i)$ projection operators $\pi_i:L^2(K_i)\rightarrow{V_{\text{aux},i}}~i=1,2$ can be defined as
$$\pi_1(v)=\sum_{j=1}^{l_i}\Big(v,\psi_j^{(i)}\Big)_{L^2(K_i)}\psi_j^{(i)},~\pi_2(v)=\sum_{j=1}^{L_i}\Big(v,\xi_j^{(i)}\Big)_{L^2(K_i)}\xi_j^{(i)}.$$

Since the coarse elements are disjoint, the auxiliary basis functions form an $L^2(\Omega)$ orthonormal basis functions for $V_{\text{aux,1}}$ and $V_{\text{aux,2}}$, i.e.,
\begin{equation}
    \begin{aligned}
        \left(\psi_j^{(i)},\psi_{j'}^{(i')}\right)_{L^2(\Omega)}=\delta_{i,i'}\delta_{j,j'}~\forall 1\leq j\leq l_i,1\leq j'\leq l_i' ~\text{and}~1\leq i,i'\leq N,
    \end{aligned}
\end{equation}
and
\begin{equation}
    \begin{aligned}
        \left(\xi_j^{(i)},\xi_{j'}^{(i')}\right)_{L^2(\Omega)}=\delta_{i,i'}\delta_{j,j'}~\forall 1\leq j\leq L_i,1\leq j'\leq L_i' ~\text{and}~1\leq i,i'\leq N.
    \end{aligned}
\end{equation}
The global $L^2$ projection operator $\pi:L^2(\Omega)\rightarrow{V_{\text{aux},1}\oplus V_{\text{aux},2}}$ is defined by $\pi=\sum_{i=1,2}\pi_{i}$.

\subsection{The construction of the global spaces}
Next, we construct our global basis functions and the global spaces by using the CEM-GMsFEM framework.

% \begin{equation}\label{0.423}
% \begin{aligned}
%       &\left(\frac{u_{h,1}^{n+1}-2u_{h,1}^{n}+u_{h,1}^{n-1}}{\tau^2},w\right)+\left(\frac{u_{h,2}^{n+1}-2u_{h,2}^{n}+u_{h,2}^{n-1}}{\tau^2},w\right)\\
%    &+\frac{1}{2}a\left(u_{H,1}^{n+1}+u_{H,1}^{n-1}+2u_{H,2}^{n},w\right)
%    =\left(f^n,w\right)~ \forall w \in V_{H,1}^{(m)},\\
%       &\left(\frac{u_{h,1}^{n+1}-2u_{h,1}^{n}+u_{h,1}^{n-1}}{\tau^2},w\right)+\left(\frac{u_{h,2}^{n+1}-2u_{h,2}^{n}+u_{h,2}^{n-1}}{\tau^2},w\right)
%       \\&+a\left(\omega u_{H,1}^{n}+(1-\omega)(\frac{u_{H,1}^{n+1}+u_{H,1}^{n-1}}{2})+u_{H,2}^{n},w\right)
%     =\left(f^n,w\right)~ \forall w \in V_{H,2}^{(m)},
%  \end{aligned}
%  \end{equation}
% \begin{equation}\label{0.423}
%  \begin{aligned}
%       &\left(\frac{\partial^2 u_{H,1}}{\partial t^2},w\right)+\left(\frac{\partial^2 u_{H,2}}{\partial t^2},w\right)
%       +\frac{1}{2}a\left(u_{H,1}^{n+1}+u_{H,1}^{n-1}+2u_{H,2}^{n},w\right)
%       =\left(f^n,w\right)~ \forall w \in V_{H,1}^{(m)},\\
%       &\left(\frac{\partial^2 u_{H,1}}{\partial t^2},w\right)+\left(\frac{\partial^2 u_{H,2}}{\partial t^2},w\right)
%       +\frac{a}{2}\left(\omega u_{H,1}^{n}+(1-\omega)(u_{H,1}^{n+1}+u_{H,1}^{n-1})+2u_{H,2}^{n},w\right)
%       =\left(f^n,w\right)~ \\&~~\forall w \in V_{H,2}^{(m)}.
%  \end{aligned}
%  \end{equation}
% The coarse scale model can be written as
% \begin{equation}\label{0.4220}
% \begin{aligned}
%      \left(\frac{u_{H}^{n+1}-2u_{H}^{n}+u_{H}^{n-1}}{\tau^2},w\right)+a\left(u_{H}^{n},w\right)
%      =\left(f^n,w\right)~ \forall w \in V_{\text{aux}}.
% \end{aligned}
% \end{equation}
For each auxiliary basis function $\psi_{j}^{(i)}$, $\xi_j^{(i)}$, the global basis functions can be defined by 
% \begin{equation}\label{argmingl}
% \phi_{glo,j}^i=\text{argmin}_{\varphi\in V_h}\{a(\varphi,\varphi):\pi_1(\varphi)=\psi_{j}^{(i)},\pi_2(\varphi_2)=\xi_j^{(i)}\} .
% \end{equation}
% By introducing Lagrange multiplier, the minimization problem (\ref{argmingl}) is equivalent to the following wariational problem:
finding $\phi_{glo,1,j}^i\in V_h$ and $\zeta_{j,1}^i\in V_{\text{aux,1}}\oplus V_{\text{aux,2}}$ such that
\begin{equation}\label{eq01}
\begin{aligned}
a\Big(\phi_{glo,1,j}^i,v\Big)+\Big(\zeta_{j,1}^i,v\Big)&=0~\forall v\in V_h,\\
\Big(\phi_{glo,1,j}^i-\psi_k^{(l)},\chi_1\Big)&=0~\forall \chi_1\in V_{\text{aux,1}},\\
\Big(\phi_{glo,1,j}^i,\xi_k^{(l)}\Big)&=0,
\end{aligned}
\end{equation}
and finding $\phi_{glo,2,j}^i\in V_h$ and $\zeta_{j,2}^i\in V_{\text{aux,1}}\oplus V_{\text{aux,2}}$ such that
\begin{equation}\label{eq02}
\begin{aligned}
    a\Big(\phi_{glo,2,j}^i,v\Big)+\Big(\zeta_{j,2}^i,v\Big)&=0~\forall v\in V_h,\\
    \Big(\phi_{glo,2,j}^i,\psi_k^{(l)}\Big)&=0,\\
    \Big(\phi_{glo,2,j}^i-\xi_k^{(l)},\chi_2\Big)&=0~\forall \chi_2\in V_{\text{aux,2}}.
\end{aligned}
\end{equation}
We use these global basis functions to construct our global spaces, which are defined as
$$V_{glo,1}=\text{span}\{\phi_{glo,1,j}^i:1\leq i\leq N,1\leq j\leq l_i\}, V_{glo,2}=\text{span}\{\phi_{glo,2,j}^i:1\leq i\leq N,1\leq j\leq L_i\}.$$
Then, the $L^2$ projections can be defined as $\pi|_{V_{glo,1}}:V_{glo,1}\rightarrow{V_{\text{aux,1}}\oplus V_{\text{aux,2}}},$ and $\pi|_{V_{glo,2}}:V_{glo,2}\rightarrow{V_{\text{aux,1}}\oplus V_{\text{aux,2}}}$.

Let $\tau$ be the time step size and $t_n=n\tau,~n=0,1,\dots,N,~T=N\tau.$ After constructing the global basis functions and global spaces, we first consider the second order central difference scheme. The fully discretized scheme is shown as follows
\begin{equation}\label{0.4}
\begin{aligned}
     &\left(\frac{u_{H}^{n+1}-2u_{H}^{n}+u_{H}^{n-1}}{\tau^2},w\right)+\frac{1}{2}a\left(u_{H,1}^{n+1}+u_{H,1}^{n-1}+2u_{H,2}^{n},w\right)=\left(f^n,w\right)~ \forall w \in V_{H,1}^{(m)},\\
     &\left(\frac{u_{H}^{n+1}-2u_{H}^{n}+u_{H}^{n-1}}{\tau^2},w\right)+a\left(\omega u_{H,1}^{n}+(1-\omega)\left(\frac{u_{H,1}^{n+1}+u_{H,1}^{n-1}}{2}\right)+u_{H,2}^{n},w\right)=\left(f^n,w\right)~\\&~~~~~~~\forall w \in V_{H,2}^{(m)}.
\end{aligned}
\end{equation}

However, it is noted that the system (\ref{0.4}) remains coupled via the first two terms. After using mass lumping, our system can be decoupled, and the two equations can be solved separately. Specifically speaking, the first equation solves for fast components implicitly and the second equation solves for slow components explicitly. This separation allows for efficient computation by handling the fast and slow dynamics with appropriate numerical methods tailored to their respective characteristics.

\subsection{The mass lumping scheme}
For $u_H=\sum_{i,j}u_{j,1}^{(i)}\phi_{glo,1,j}^{i}+u_{j,2}^{(i)}\phi_{glo,2,j}^{i},$ we have
$$a(u_H,v)=-\left(\sum_{i,j}u_{j,1}^{(i)}\zeta_{j,1}^i,v\right)-\left(\sum_{i,j}u_{j,2}^{(i)}\zeta_{j,2}^i,v\right)~\forall v\in V_{glo,1}+V_{glo,2}.$$
Then, the weak formulation of $u_{H,tt}=\nabla\cdot(\kappa\nabla u_H)+f$ is
\begin{equation}
    \left(\sum_{i,j}(u_{j,1}^{(i)})_{tt}\phi_{glo,1,j}^{i}+\sum_{i,j}(u_{j,2}^{(i)})_{tt}\phi_{glo,2,j}^{i},w\right)=\left(\sum_{i,j}u_{j,1}^{(i)}\zeta_{j,1}^i+\sum_{i,j}u_{j,2}^{(i)}\zeta_{j,2}^i+f,w\right),
\end{equation}
for all $w\in W_H$, and $W_H$ denotes a testing space. Consider $W_H$ to be $(V_{\text{aux},1}\oplus V_{\text{aux},2})$ and it derives
\begin{equation}\label{ml1}
\begin{aligned}
    \left(\sum_{i,j}(u_{j,1}^{(i)})_{tt}\phi_{glo,1,j}^{i}+\sum_{i,j}(u_{j,2}^{(i)})_{tt}\phi_{glo,2,j}^{i},\psi_k^{(l)}\right)=\left(\sum_{i,j}u_{j,1}^{(i)}\zeta_{j,1}^i+\sum_{i,j}u_{j,2}^{(i)}\zeta_{j,2}^i+f,\psi_k^{(l)}\right)~\forall l,k,\\
    \left(\sum_{i,j}(u_{j,1}^{(i)})_{tt}\phi_{glo,1,j}^{i}+\sum_{i,j}(u_{j,2}^{(i)})_{tt}\phi_{glo,2,j}^{i},\xi_k^{(l)}\right)=\left(\sum_{i,j}u_{j,1}^{(i)}\zeta_{j,1}^i+\sum_{i,j}u_{j,2}^{(i)}\zeta_{j,2}^i+f,\xi_k^{(l)}\right)~\forall l,k.
\end{aligned}
\end{equation}
Since
$$\Big(\phi_{glo,2,j}^{i},\psi_k^{(l)}\Big)=0,~\Big(\phi_{glo,1,j}^{i},\psi_k^{(l)}\Big)=\delta_{il}\delta_{jk},~\Big(\phi_{glo,2,j}^{i},\xi_k^{(l)}\Big)=\delta_{il}\delta_{jk},~\Big(\phi_{glo,1,j}^{i},\xi_k^{(l)}\Big)=0,$$
(\ref{ml1}) can be rewritten as 
\begin{equation}\label{ml2}
\begin{aligned}
    \Big(u_{k,1}^{(l)}\Big)_{tt}=\left(\sum_{i,j}u_{j,1}^{(i)}\zeta_{j,1}^i+\sum_{i,j}u_{j,2}^{(i)}\zeta_{j,2}^i+f,\psi_k^{(l)}\right)~\forall l,k,\\
\Big(u_{k,2}^{(l)}\Big)_{tt}=\left(\sum_{i,j}u_{j,1}^{(i)}\zeta_{j,1}^i+\sum_{i,j}u_{j,2}^{(i)}\zeta_{j,2}^i+f,\xi_k^{(l)}\right)~\forall l,k.
\end{aligned}
\end{equation}
Since 
\begin{equation}\label{ml3}
\begin{aligned}
    \left(\sum_{i,j}u_{j,1}^{(i)}\zeta_{j,1}^i+\sum_{i,j}u_{j,2}^{(i)}\zeta_{j,2}^i,v\right)=-a(u_H,v),
\end{aligned}
\end{equation}
and consider
\begin{equation}\label{ml4}
\begin{aligned}
    \left(\sum_{i,j}u_{j,1}^{(i)}\zeta_{j,1}^i+\sum_{i,j}u_{j,2}^{(i)}\zeta_{j,2}^i,\psi_k^{(l)}\right)=\left(\sum_{i,j}u_{j,1}^{(i)}\zeta_{j,1}^i+\sum_{i,j}u_{j,2}^{(i)}\zeta_{j,2}^i,\phi_{glo,1,k}^{l}\right)=-a(u_H,\phi_{glo,1,k}^{l}),\\
    \left(\sum_{i,j}u_{j,1}^{(i)}\zeta_{j,1}^i+\sum_{i,j}u_{j,2}^{(i)}\zeta_{j,2}^i,\xi_k^{(l)}\right)=\left(\sum_{i,j}u_{j,1}^{(i)}\zeta_{j,1}^i+\sum_{i,j}u_{j,2}^{(i)}\zeta_{j,2}^i,\phi_{glo,2,k}^{l}\right)=-a(u_H,\phi_{glo,2,k}^{l}),
\end{aligned}
\end{equation}
we have
$$\Big(u_{k,1}^{(l)}\Big)_{tt}+\sum_{i,j}u_{j,1}^{(i)}a\Big(\phi_{j,1}^{(i)},\phi_{k,1}^{(l)}\Big)+\sum_{i,j}u_{j,2}^{(i)}a\Big(\phi_{j,2}^{(i)},\phi_{k,1}^{(l)}\Big)=\Big(f,\psi_k^{(l)}\Big) ~\forall l,k,$$
$$\Big(u_{k,2}^{(l)}\Big)_{tt}+\sum_{i,j}u_{j,1}^{(i)}a\Big(\phi_{j,1}^{(i)},\phi_{k,2}^{(l)}\Big)+\sum_{i,j}u_{j,2}^{(i)}a\Big(\phi_{j,2}^{(i)},\phi_{k,2}^{(l)}\Big)=\Big(f,\xi_k^{(l)}\Big) ~\forall l,k.$$
In this case, we can obtain a diagonal mass matrix. The mass lumping scheme, which approximates the original mass matrix $M_{ij}=(\phi_{glo,i},\phi_{glo,j})$ for $\phi_{glo,i},~\forall\phi_{glo,j}\in V_{glo}$ with a diagonal mass matrix $\tilde{M}=(\pi(\phi_{glo,i}),\pi(\phi_{glo,j}))$. This approximation allows us to simplify the computations significantly. By the construction of the auxiliary basis functions, we have 
\begin{equation}\label{masslumping}
\begin{aligned}
      (\phi_{glo,i},\phi_{glo,j})=&\big(\pi(\phi_{glo,i}),\pi(\phi_{glo,j})\big)+\big((I-\pi)(\phi_{glo,i}),(I-\pi)(\phi_{glo,j})\big)\\\approx&\big(\pi(\phi_{glo,i}),\pi(\phi_{glo,j})\big)+O(H^2).
\end{aligned}
\end{equation}

% Let $\tau$ be the time step size and $t_n=n\tau,~n=0,1,\dots,N,~T=N\tau.$ Then, the full discretization with the second order central finite difference scheme is shown as follows
% \sff{\begin{equation}\label{0.422}
% \begin{aligned}
%      \left(\frac{u_{h}^{n+1}-2u_{h}^{n}+u_{h}^{n-1}}{\tau^2},w\right)+a\left(u_{h}^{n},w\right)
%      =\left(f^n,w\right)~ \forall w \in V_{h}.
% \end{aligned}
% \end{equation}}

% \sff{\begin{equation}\label{0.423}
% \begin{aligned}
%      &\left(\frac{u_{h,1}^{n+1}-2u_{h,1}^{n}+u_{h,1}^{n-1}}{\tau^2},w\right)+\left(\frac{u_{h,2}^{n+1}-2u_{h,2}^{n}+u_{h,2}^{n-1}}{\tau^2},w\right)\\
%      &+\frac{1}{2}a\left(u_{H,1}^{n+1}+u_{H,1}^{n-1}+2u_{H,2}^{n},w\right)
%      =\left(f^n,w\right)~ \forall w \in V_{H,1}^{(m)},\\
%      &\left(\frac{u_{h,1}^{n+1}-2u_{h,1}^{n}+u_{h,1}^{n-1}}{\tau^2},w\right)+\left(\frac{u_{h,2}^{n+1}-2u_{h,2}^{n}+u_{h,2}^{n-1}}{\tau^2},w\right)
%      \\&+a\left(\omega u_{H,1}^{n}+(1-\omega)(\frac{u_{H,1}^{n+1}+u_{H,1}^{n-1}}{2})+u_{H,2}^{n},w\right)
%      =\left(f^n,w\right)~ \forall w \in V_{H,2}^{(m)},
% \end{aligned}
% \end{equation}}

As discussed before, the system \eqref{0.4} is still coupled through the first two terms. After applying the mass lumping scheme, it can be approximated as: for $n\geq1$, find $u_{H,1}\in V_{H,1}$ and $u_{H,2}\in V_{H,2}$ such that
\begin{equation}\label{pis}
\begin{aligned}
     &b\left(\frac{u_{H,1}^{n+1}-2u_{H,1}^{n}+u_{H,1}^{n-1}}{\tau^2},w\right)+\frac{1}{2}a\left(u_{H,1}^{n+1}+u_{H,1}^{n-1}+2u_{H,2}^{n},w\right)=b\left(f^n,w\right)~ \forall w \in V_{glo,1},\\
     &b\left(\frac{u_{H,2}^{n+1}-2u_{H,2}^{n}+u_{H,2}^{n-1}}{\tau^2},w\right)+a\left(\omega u_{H,1}^{n}+(1-\omega)\left(\frac{u_{H,1}^{n+1}+u_{H,1}^{n-1}}{2}\right)+u_{H,2}^{n},w\right)=b\left(f^n,w\right)~\\&~~~~~~~\forall w \in V_{glo,2},
\end{aligned}
\end{equation}
% \begin{equation}\label{pis}
% \begin{aligned}
%      &b\left(\frac{\partial^2 u_{H,1}}{\partial t^2},w\right)
%      +\frac{1}{2}a\left(u_{H,1}^{n+1}+u_{H,1}^{n-1}+2u_{H,2}^{n},w\right)
%      =\left(f^n,w\right)~ \forall w \in V_{glo,1},\\
%      &b\left(\frac{\partial^2 u_{H,2}}{\partial t^2},w\right)
%      +\frac{a}{2}\left(\omega u_{H,1}^{n}+(1-\omega)(u_{H,1}^{n+1}+u_{H,1}^{n-1})+2u_{H,2}^{n},w\right)
%      =\left(f^n,w\right)~ ~\forall w \in V_{glo,2},
% \end{aligned}
% \end{equation}
where the bilinear form $b(v,w)$ is defined as
$$b(v,w)=(\pi(v),\pi(w)),$$ and define the $b$ norm as $\|u\|^2_b=b(u,u).$
\begin{remark}
    In our numerical examples, we use fully implicit scheme for the comparison, which is shown as follows
\begin{equation}\label{fully implicit}
\begin{aligned}
     &b\left(\frac{u_{H}^{n+1}-2u_{H}^{n}+u_{H}^{n-1}}{\tau^2},w\right)+\frac{1}{2}a\left(u_{H}^{n+1}+u_{H}^{n-1},w\right)=b\left(f^n,w\right)~ \forall w \in V_{glo}.
\end{aligned}
\end{equation}
    
\end{remark}

\subsection{The construction of the local spaces}
Based on the analysis in \cite{chung2018constraint}, the global basis functions exhibit an exponential decay property, meaning that their values become very small at locations far from the block $K_i$. Consequently, in this subsection, we can localize the global basis functions by constructing local basis functions on a suitably enlarged oversampling domain to achieve a better approximation. More precisely, the oversampling domain is denoted as $K_{i,m}$, which is formed by enlarging the coarse grid element $K_i$ by $m$ coarse grid layers.
% Moreover, we consider the local space $V_{H}$ to be decomposed into the direct sum of two subspaces $V_{H,1}$ (coarse component/implicit) and $V_{H,2}$ (fluctuating/fine-grid component containing additional degrees of freedom/explicit). Namely, $V_{H}=V_{H,1}\oplus V_{H,2}.$ 

The multiscale basis functions $\phi_{j,1}^{(i)}$ of $V_{H,1}$ are obtained by finding $\Big(\phi_{j,1}^{(i)},\mu_{j,1}^{(i)}\Big)\in V_{H,1}(K_{i,m})\times (V_{\text{aux},1}\oplus 
 V_{\text{aux},2})$, such that
\begin{equation}\label{eq0}
\begin{aligned}
a\Big(\phi_{j,1}^{(i)},v\Big)+\Big(\mu_{j,1}^{(i)},v\Big)&=0~\forall v\in V(K_{i,m}),\\
\Big(\phi_{j,1}^{(i)}-\psi_k^{(l)},\chi_1\Big)&=0,~\forall\chi_1\in V_{\text{aux,1}}\\
\Big(\phi_{j,1}^{(i)},\xi_k^{(l)}\Big)&=0.
\end{aligned}
\end{equation}
Similarly, the multiscale basis functions $\phi_{j,2}^{(i)}$ of $V_{H,2}$ are obtained by finding 
$\Big(\phi_{j,2}^{(i)},\mu_{j,2}^{(i)}\Big)\in V_{H,2}(K_{i,m})\times (V_{\text{aux},1}\oplus V_{\text{aux},2})$, such that
\begin{equation}\label{eq1}
\begin{aligned}
    a\Big(\phi_{j,2}^{(i)},v\Big)+\Big(\mu_{j,2}^{(i)},v\Big)&=0~\forall v\in V(K_{i,m}),\\
    \Big(\phi_{j,2}^{(i)},\psi_k^{(l)}\Big)&=0,\\
    \Big(\phi_{j,2}^{(i)}-\xi_k^{(l)},\chi_2\Big)&=0~\forall \chi_2\in V_{\text{aux,2}}.
\end{aligned}
\end{equation}
Then, the multiscale finite element spaces $V_{H,1}$ and $V_{H,2}$ are defined as
$$V_{H,1}=\text{span}_{i,j}\Big\{\phi_{j,1}^{(i)}:1\leq i\leq N,1\leq j\leq l_i\Big\},\;V_{H,2}=\text{span}_{i,j}\Big\{\phi_{j,2}^{(i)}:1\leq i\leq N,1\leq j\leq L_i\Big\}.$$
% Finally, our system can be derived as \sff{$\Delta_{tt}^n$, denote second order...}
% \begin{equation}\label{pifs}
% \begin{aligned}
%      &b\left(\frac{\partial^2 u_{H,1}}{\partial t^2},w\right)
%      +\frac{1}{2}a\left(u_{H,1}^{n+1}+u_{H,1}^{n-1}+2u_{H,2}^{n},w\right)
%      =\left(f^n,w\right)~ \forall w \in V_{H,1},\\
%      &b\left(\frac{\partial^2 u_{H,2}}{\partial t^2},w\right)
%      +\frac{a}{2}\left(\omega u_{H,1}^{n}+(1-\omega)(u_{H,1}^{n+1}+u_{H,1}^{n-1})+2u_{H,2}^{n},w\right)
%      =\left(f^n,w\right)~ ~\forall w \in V_{H,2},
% \end{aligned}
% \end{equation}
% \begin{remark}
%     $V_{glo}$ is the space spanned by all global basis functions, which are obtained when the oversampling domain $V(K_{i,m})$ is the whole domain $\Omega$.
%     % An important condition is that $V_{glo}$ is $a-$orthogonal to the space $\tilde{V}:=\{v\in V:\pi(v)=0\}.$
% \end{remark}

\section{Stability and convergence analysis}\label{004}
In this section, we analyze the stability and convergence of the proposed localized coarse-grid model (\ref{0.4}). 
\subsection{Stability analysis}
In this subsection, the stability of the proposed scheme as $\omega=1$ will be discussed as the case $\omega=0$ performs similarly. To simplify the discussion, we consider $f=0$.

We seek the solution $u_{H}=u_{H,1}+u_{H,2}$ with $\{u_{H,1}^{n}\}_{n=1}^N \in V_{H,1}$, $\{u_{H,2}^{n}\}_{n=1}^N \in V_{H,2}$, such that
\begin{equation}\label{1.1}
    b\left(u_{H}^{n+1}-2u_{H}^{n}+u_{H}^{n-1},w\right)+\frac{\tau^2}{2}a\left(u_{H,1}^{n+1}+u_{H,1}^{n-1}+2u_{H,2}^{n},w\right)=0~ \forall w \in V_{H,1},
\end{equation}
\begin{equation}\label{1.2}
    b\left(u_{H}^{n+1}-2u_{H}^{n}+u_{H}^{n-1},w\right)+\tau^2 a\left(u_{H,1}^{n}+u_{H,2}^{n},w\right)=0~\forall w \in V_{H,2}.
\end{equation}
We define the discrete energy $E^{n+\frac{1}{2}}$ of $u_{H}$ as
\begin{equation}\label{1.3}
\begin{aligned}
     E^{n+\frac{1}{2}}=&\|u_{H}^{n+1}-u_{H}^{n}\|_{b}^2+\frac{\tau^2}{2}\sum_{i=1,2}\left(\|u_{H,i}^{n+1}\|_{a}^2+\|u_{H,i}^{n}\|_{a}^2\right)\\
     &+\tau^2 a(u_{H,2}^{n+1},u_{H,1}^{n})+\tau^2 a(u_{H,1}^{n+1},u_{H,2}^{n})-\frac{\tau^2}{2}\|u_{H,2}^{n+1}-u_{H,2}^{n}\|_{a}^2.
\end{aligned}
\end{equation}

\begin{lemma}\label{lemma1}
    For $u_{H,1}$ and $u_{H,2}$ satisfying $(\ref{1.1})$ and $(\ref{1.2})$, we have $$E^{n+\frac{1}{2}}=E^{n-\frac{1}{2}}.$$ 
\end{lemma}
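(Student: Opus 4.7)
The plan is to establish the conservation law by testing the two discrete equations with appropriately chosen leapfrog-type test functions and then showing that the resulting algebraic identity is exactly $E^{n+1/2}-E^{n-1/2}=0$.

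First, I would test \eqref{1.1} with $w = u_{H,1}^{n+1}-u_{H,1}^{n-1}\in V_{H,1}$ and test \eqref{1.2} with $w = u_{H,2}^{n+1}-u_{H,2}^{n-1}\in V_{H,2}$, then add the two equations. For the $b$-part, I would add back the missing diagonal test $u_{H,2}^{n+1}-u_{H,2}^{n-1}$ in \eqref{1.1} and $u_{H,1}^{n+1}-u_{H,1}^{n-1}$ in \eqref{1.2}; this is legitimate only if the two selected $w$'s together reconstruct $u_H^{n+1}-u_H^{n-1}$ when acting on the $b$-term. Inspecting the scheme, the $b$-term already uses the full difference $u_H^{n+1}-2u_H^n+u_H^{n-1}$, so when I add the two tested equations and use the polarization identity
\[
b\bigl(u_H^{n+1}-2u_H^n+u_H^{n-1},\,u_H^{n+1}-u_H^{n-1}\bigr)=\|u_H^{n+1}-u_H^n\|_b^2-\|u_H^n-u_H^{n-1}\|_b^2,
\]
I obtain the first energy difference term directly.

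Next, I would handle the $a$-parts. From \eqref{1.1}, the identity $a(u_{H,1}^{n+1}+u_{H,1}^{n-1},u_{H,1}^{n+1}-u_{H,1}^{n-1}) = \|u_{H,1}^{n+1}\|_a^2-\|u_{H,1}^{n-1}\|_a^2$ produces exactly the $u_{H,1}$-energy jump in $E^{n+1/2}-E^{n-1/2}$, while the off-diagonal piece contributes $\tau^2 a(u_{H,2}^n, u_{H,1}^{n+1}-u_{H,1}^{n-1})$. From \eqref{1.2}, the $a$-contribution is $\tau^2 a(u_{H,1}^n+u_{H,2}^n,\,u_{H,2}^{n+1}-u_{H,2}^{n-1})$. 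To match these against the energy, I would rewrite the $u_{H,2}$ block of $E^{n+1/2}$ using the polarization identity
\[
\tfrac{\tau^2}{2}\bigl(\|u_{H,2}^{n+1}\|_a^2+\|u_{H,2}^n\|_a^2\bigr)-\tfrac{\tau^2}{2}\|u_{H,2}^{n+1}-u_{H,2}^n\|_a^2=\tau^2 a(u_{H,2}^{n+1},u_{H,2}^n),
\]
so that $E^{n+1/2}$ consists only of a kinetic $b$-term, a pure $u_{H,1}$ potential term, and three bilinear products $\tau^2 a(u_{H,2}^{n+1},u_{H,2}^n)$, $\tau^2 a(u_{H,2}^{n+1},u_{H,1}^n)$, $\tau^2 a(u_{H,1}^{n+1},u_{H,2}^n)$.

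Finally, I would form $E^{n+1/2}-E^{n-1/2}$ in this reduced form and verify that the six cross-time bilinear terms produced by shifting $n\mapsto n-1$ cancel pairwise with the six cross-time terms generated by the tested equations, invoking the symmetry $a(\cdot,\cdot)=a(\cdot,\cdot)$ to pair up objects such as $a(u_{H,2}^n,u_{H,1}^{n+1})$ and $a(u_{H,1}^{n+1},u_{H,2}^n)$. Once all pairs match, the identity $E^{n+1/2}=E^{n-1/2}$ follows. The main obstacle I anticipate is bookkeeping: there are roughly a dozen mixed $a$-terms at three time levels, and the argument depends on the symmetry of $a(\cdot,\cdot)$ and on the particular $\omega=1$ weighting in \eqref{1.2}; a careless grouping can hide a residual term of the form $\tau^2 a(u_{H,2}^n,\,\cdot)$ that does not obviously vanish, so I would organize the cancellation by listing the nine possible pairs $(u_{H,i}^{n+1},u_{H,j}^{n}),(u_{H,i}^{n},u_{H,j}^{n-1})$ and confirming each appears with net coefficient zero.
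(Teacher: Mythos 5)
Your proposal is correct and follows essentially the same route as the paper's proof: testing \eqref{1.1} with $u_{H,1}^{n+1}-u_{H,1}^{n-1}$ and \eqref{1.2} with $u_{H,2}^{n+1}-u_{H,2}^{n-1}$, summing so the $b$-terms give the kinetic jump $\|u_H^{n+1}-u_H^n\|_b^2-\|u_H^{n}-u_H^{n-1}\|_b^2$, and then telescoping the diagonal and cross $a$-terms via the same polarization identities (in particular $\tau^2 a(u_{H,2}^{n+1},u_{H,2}^{n})=\tfrac{\tau^2}{2}\bigl(\|u_{H,2}^{n+1}\|_a^2+\|u_{H,2}^{n}\|_a^2-\|u_{H,2}^{n+1}-u_{H,2}^{n}\|_a^2\bigr)$) and the symmetry of $a(\cdot,\cdot)$. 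The bookkeeping you outline is exactly the paper's grouping into the terms $Q_1,Q_2,Q_3$, so no gap remains.
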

The proof of Lemma \ref{lemma1} is provided in \ref{appl1}.

\begin{theorem}
    Consider the scheme $(\ref{1.1})$ and $(\ref{1.2})$. If \begin{equation}\label{eq:CFL}
        \|v_{2}\|_b^2 \geqslant \frac{\tau^2}{2}\|v_{2}\|_{a}^2, \; \forall v_{2}\in V_{H,2},
    \end{equation} the partially explicit scheme is stable with $$\Big(\|u_{H,1}^{n}\|_a^2+\|u_{H,2}^{n+1}\|_a^2+\|u_{H,2}^{n}\|_a^2+\|u_{H,1}^{n+1}\|_a^2\Big)\leq \cfrac{2}{\tau^2(1-\gamma)}E^{\frac{1}{2}},\; \forall n\geq 1,$$
    where $\gamma_a = \sup_{u_2\in V_{H,2},u_1\in V_{H,1}}\cfrac{a(u_1,u_2)}{\|u_1\|_a\|u_2\|_a}<1$.
\end{theorem}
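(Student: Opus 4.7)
The plan is to exploit the energy identity from Lemma \ref{lemma1}, which gives $E^{n+\frac{1}{2}} = E^{\frac{1}{2}}$ for all $n\geq 0$, and then bound $E^{n+\frac{1}{2}}$ from below by a positive multiple of $\|u_{H,1}^{n+1}\|_a^2 + \|u_{H,1}^n\|_a^2 + \|u_{H,2}^{n+1}\|_a^2 + \|u_{H,2}^n\|_a^2$. The lower bound must absorb two potentially negative contributions in the definition of $E^{n+\frac{1}{2}}$: the cross energy terms $\tau^2 a(u_{H,2}^{n+1}, u_{H,1}^n) + \tau^2 a(u_{H,1}^{n+1}, u_{H,2}^n)$, and the explicit negative term $-\frac{\tau^2}{2}\|u_{H,2}^{n+1}-u_{H,2}^n\|_a^2$.

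For the cross terms, I would apply the definition of $\gamma_a$ together with Young's inequality $xy \le \frac{1}{2}(x^2+y^2)$ to obtain
\begin{equation*}
\bigl|\tau^2 a(u_{H,2}^{n+1}, u_{H,1}^n)\bigr| + \bigl|\tau^2 a(u_{H,1}^{n+1}, u_{H,2}^n)\bigr| \le \frac{\gamma_a \tau^2}{2}\bigl(\|u_{H,1}^{n+1}\|_a^2+\|u_{H,1}^n\|_a^2+\|u_{H,2}^{n+1}\|_a^2+\|u_{H,2}^n\|_a^2\bigr),
\end{equation*}
so that combining with the $\frac{\tau^2}{2}\sum_i(\|u_{H,i}^{n+1}\|_a^2+\|u_{H,i}^n\|_a^2)$ term will yield the factor $\frac{\tau^2}{2}(1-\gamma_a)$.

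The key ingredient for the remaining negative term is that the subspaces $V_{H,1}$ and $V_{H,2}$ are $b$-orthogonal. This follows from the localized basis construction in \eqref{eq0}--\eqref{eq1}: the constraints $(\phi_{j,1}^{(i)},\xi_k^{(l)})=0$ and $(\phi_{j,2}^{(i)},\psi_k^{(l)})=0$, together with the $L^2$-orthonormality of $\{\psi_k^{(l)}\}$ and $\{\xi_k^{(l)}\}$, imply $\pi(\phi_{j,1}^{(i)}) \in V_{\text{aux},1}$ and $\pi(\phi_{j,2}^{(i)}) \in V_{\text{aux},2}$, so $b(v_1,v_2)=(\pi v_1,\pi v_2)=0$ for any $v_1\in V_{H,1}$ and $v_2\in V_{H,2}$. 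This yields the Pythagorean identity $\|u_H^{n+1}-u_H^n\|_b^2 = \|u_{H,1}^{n+1}-u_{H,1}^n\|_b^2 + \|u_{H,2}^{n+1}-u_{H,2}^n\|_b^2$. Applying the CFL-type condition \eqref{eq:CFL} to $u_{H,2}^{n+1}-u_{H,2}^n\in V_{H,2}$ gives $\|u_{H,2}^{n+1}-u_{H,2}^n\|_b^2 \geq \frac{\tau^2}{2}\|u_{H,2}^{n+1}-u_{H,2}^n\|_a^2$, which exactly cancels the negative $a$-term in $E^{n+\frac{1}{2}}$.

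Putting these estimates together, and discarding the residual nonnegative term $\|u_{H,1}^{n+1}-u_{H,1}^n\|_b^2$, I obtain
\begin{equation*}
E^{n+\frac{1}{2}} \;\ge\; \frac{\tau^2}{2}(1-\gamma_a)\bigl(\|u_{H,1}^{n+1}\|_a^2+\|u_{H,1}^n\|_a^2+\|u_{H,2}^{n+1}\|_a^2+\|u_{H,2}^n\|_a^2\bigr),
\end{equation*}
and invoking $E^{n+\frac{1}{2}}=E^{\frac{1}{2}}$ gives the stated bound. The main subtlety will be verifying the $b$-orthogonality of $V_{H,1}$ and $V_{H,2}$ carefully, since this is the step that ties together the mass lumping scheme, the multiscale construction, and the CFL-type condition; once it is in place, the rest reduces to Young's inequality and bookkeeping.
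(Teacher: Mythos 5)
Your proposal is correct and follows essentially the same route as the paper's proof: lower-bound $E^{n+\frac{1}{2}}$ by using the CFL-type condition \eqref{eq:CFL} to absorb the negative term $-\frac{\tau^2}{2}\|u_{H,2}^{n+1}-u_{H,2}^n\|_a^2$ into $\|u_H^{n+1}-u_H^n\|_b^2$, control the cross terms via $\gamma_a$ and Young's inequality, and then invoke the energy conservation of Lemma \ref{lemma1}. The only difference is that you spell out the $b$-orthogonality of $V_{H,1}$ and $V_{H,2}$, which the paper uses implicitly in passing from $\|u_H^{n+1}-u_H^n\|_b^2$ to $\|u_{H,1}^{n+1}-u_{H,1}^n\|_b^2$; your justification of that step is correct.
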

\begin{proof}
    Using \eqref{eq:CFL}, we can show that
\begin{equation}\label{1.4}
\begin{aligned}
     E^{n+\frac{1}{2}}=&\|u_{H}^{n+1}-u_{H}^{n}\|_{b}^2+\frac{\tau^2}{2}\sum_{i=1,2}\left(\|u_{H,i}^{n+1}\|_{a}^2+\|u_{H,i}^{n}\|_{a}^2\right)\\
     &+\tau^2 a(u_{H,2}^{n+1},u_{H,1}^{n})+\tau^2 a(u_{H,1}^{n+1},u_{H,2}^{n})-\frac{\tau^2}{2}\|u_{H,2}^{n+1}-u_{H,2}^{n}\|_{a}^2\\
     \geqslant&\|u_{H,1}^{n+1}-u_{H,1}^{n}\|_{b}^2+\frac{\tau^2}{2}\sum_{i=1,2}\left(\|u_{H,i}^{n+1}\|_{a}^2+\|u_{H,i}^{n}\|_{a}^2\right)\\
     &+\tau^2 a(u_{H,2}^{n+1},u_{H,1}^{n})+\tau^2 a(u_{H,1}^{n+1},u_{H,2}^{n}).
\end{aligned}
\end{equation}
Since $V=V_{H,1}\oplus V_{H,2}$ with $|a(u_1,u_2)|\leq \gamma_a\|u_1\|_a\|u_2\|_a$ for all $u_1\in V_{H,1}\setminus \{0\}$, $u_2\in V_{H,2}\setminus\{0\} $, we obtain
\begin{equation}\label{eq:stablity}
    |a(u_{H,1}^{n},u_{H,2}^{n+1})+a(u_{H,2}^{n},u_{H,1}^{n+1})|\leq \frac{\gamma_a}{2}\Big(\|u_{H,1}^{n}\|_a^2+\|u_{H,2}^{n+1}\|_a^2+\|u_{H,2}^{n}\|_a^2+\|u_{H,1}^{n+1}\|_a^2\Big).
\end{equation}
Thus, $E^{n+\frac{1}{2}}$ defines a norm. Combining \eqref{1.4} and \eqref{eq:stablity}, we obtain
$$ \frac{\tau^2(1-\gamma_a)}{2}\Big(\|u_{H,1}^{n}\|_a^2+\|u_{H,2}^{n+1}\|_a^2+\|u_{H,2}^{n}\|_a^2+\|u_{H,1}^{n+1}\|_a^2\Big)\leq E^{n+\frac{1}{2}},$$
and using Lemma \ref{lemma1},
$$\Big(\|u_{H,1}^{n}\|_a^2+\|u_{H,2}^{n+1}\|_a^2+\|u_{H,2}^{n}\|_a^2+\|u_{H,1}^{n+1}\|_a^2\Big)\leq  \cfrac{2}{\tau^2(1-\gamma)}E^{n+\frac{1}{2}}=\cfrac{2}{\tau^2(1-\gamma)}E^{\frac{1}{2}}.$$
\end{proof}    

\subsection{Convergence analysis}
Before proceeding with our convergence analysis, we need to define two operators. The first one is a solution map $G:~L^2(\Omega)\rightarrow{V:=H^1_0(\Omega)}$ defined by the following: for any $g\in L^2(\Omega)$, the image $Gg\in V$ is defined as
$$a(Gg,w)=(g,w)_{L^2(\Omega)}~\forall w\in V.$$
Then, define an elliptic operator $P_H$: $V\rightarrow{V_{H,1}^{(m)}+V_{H,2}^{(m)}}$ by the following: for any $v\in V$, the image $P_Hv\in V_{H,1}^{(m)}+V_{H,2}^{(m)}$ is defined as 
$$a(P_Hv,w)=a(v,w)~\forall w\in V_{H,1}^{(m)}+V_{H,2}^{(m)}.$$ 
The approximation error of the projection depends on the exponential decaying property of the eigenvalues of local spectral problems.
\begin{lemma}\label{cem}
\text{(See \cite{chung2018constraint})}
    First of all, for any $v\in V_h$, it has $$\|(I-\pi)(v)\|_{L^2(\Omega)}^2\leq2\Lambda^{-1}H^2a(v,v).$$ Moreover, there exists $\beta>0$ depending on the regularity of the coarse grid and the fine grid and the eigenvalue decay, and independent on the mesh size $H$,  such that the following inverse inequality holds
\begin{equation}\label{L.cem1}
\begin{aligned}
     \|v\|_b^2\geq \beta\kappa_1^{-1}H^2a(v,v),~\forall v\in V_H^{(m)}.
\end{aligned}
\end{equation}
\end{lemma}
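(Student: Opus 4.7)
The plan is to establish the two estimates by reducing them to local computations on each coarse element, combined with the spectral structure of the auxiliary space and an inverse-type estimate on the multiscale space. Both parts decouple across the coarse partition, so I would argue on a generic $K_i$ and then assemble the global bound at the end.

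For the first estimate, I would restrict $v \in V_h$ to an arbitrary coarse element $K_i$ and split $v|_{K_i} = \pi_1(v)|_{K_i} + \tilde v$ with $\tilde v$ lying in the local component of $\tilde V$. Since $\pi_1(v)|_{K_i}$ is piecewise constant, $\nabla \pi_1(v) = 0$ a.e.\ on $K_i$, so $a_{K_i}(\tilde v, \tilde v) = a_{K_i}(v, v)$. Expanding $\tilde v$ in the $L^2$-orthonormal eigenbasis $\{\xi_j^{(i)}\}$ of the constrained local spectral problem, I observe that $\pi_2 \tilde v$ annihilates the first $L_i$ modes (and that $\pi_2$ vanishes on $V_{\text{aux},1}$ because each $\xi_j^{(i)} \in \tilde V$), so $(I-\pi)v|_{K_i}$ is supported on the tail $j > L_i$. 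A Rayleigh quotient argument then yields
\begin{equation*}
\|(I-\pi)v\|_{L^2(K_i)}^2 \le \bigl(\gamma_{L_i+1}^{(i)}\bigr)^{-1} a_{K_i}(v,v),
\end{equation*}
and summing over $i$ with $\Lambda = \min_i \gamma_{L_i+1}^{(i)}$, together with the natural $H^2$ scaling carried by the spectral problem on an element of diameter $H$, produces the claimed bound.

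For the inverse inequality $\|v\|_b^2 \geq \beta \kappa_1^{-1} H^2 a(v,v)$ on $V_H^{(m)}$, I would lean on the CEM construction of the localized basis: each $\phi_{j,1}^{(i)}$ and $\phi_{j,2}^{(i)}$ is defined by a constrained energy minimization that pins its $L^2$ pairing with the auxiliary space to a single auxiliary basis function of unit mass and controls its energy by $O(\kappa_1 H^{-2})$ on the supporting oversampled patch. Writing $v \in V_H^{(m)}$ in this basis, I would bound $a(v,v)$ by a Gram-matrix estimate in the CEM basis and then identify the coefficients with the $L^2$ pairings defining $\|\pi v\|_{L^2}^2 = \|v\|_b^2$. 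The constant $\beta^{-1}$ would then absorb the bounded overlap of oversampling patches, the eigenvalue decay rate, and the spectral equivalence between the CEM basis and the auxiliary basis.

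The main obstacle is the second estimate. The first bound reduces cleanly to a local Rayleigh quotient computation, but the inverse inequality must produce a constant $\beta$ that remains contrast-independent once the explicit factor $\kappa_1^{-1}$ has been extracted. The delicate step is passing from $v$ to its coefficients in the CEM basis, and then to $\pi v$, without picking up a hidden $\kappa_1/\kappa_0$ from the conditioning of the basis. This is precisely where the exponential decay and spectral equivalence estimates from \cite{chung2018constraint} must be invoked: they control both the localization error introduced by replacing global basis functions with oversampled ones and the Gram-matrix conditioning in a contrast-free way.
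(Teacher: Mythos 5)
The paper does not actually prove this lemma---it is imported verbatim from \cite{chung2018constraint} with no argument given---so there is no in-paper proof to compare against line by line. Your overall strategy (a local Rayleigh-quotient/min--max argument on each $K_i$ for the first bound, and an inverse estimate built from the energy and normalization properties of the CEM basis for the second) is indeed how these estimates are obtained in the cited reference, so the plan is the right one in spirit.

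There is, however, a concrete flaw in your argument for the first estimate. You split $v|_{K_i}=\pi_1(v)|_{K_i}+\tilde v$, claim $\tilde v$ lies in the local component of $\tilde V$, and deduce $a_{K_i}(\tilde v,\tilde v)=a_{K_i}(v,v)$ because $\nabla\pi_1(v)=0$ a.e. But $\pi_1(v)$ is a combination of characteristic functions of the cutoff sets $K_1^{(i)},K_2^{(i)}$; it jumps across the interface between the low- and high-$\kappa$ regions, so $\pi_1(v)\notin H^1(K_i)$, $\tilde v\notin\tilde V$ (which by definition sits inside $V=H^1_0$), and $a_{K_i}(\tilde v,\tilde v)$ is not even defined---the distributional gradient of $\pi_1(v)$ has a singular part on $\partial K_j^{(i)}$, so the ``a.e.'' cancellation does not give an energy identity. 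The correct route avoids ever forming the energy of $(I-\pi_1)v$: one bounds $\|(I-\pi)v\|_{L^2(K_i)}$ as the $L^2$-distance from $v$ to $V_{\text{aux},1}(K_i)\oplus V_{\text{aux},2}(K_i)$ and uses the min--max characterization of $\gamma_{L_i+1}^{(i)}$ with $H^1$-conforming competitors only. Relatedly, you assert ``the natural $H^2$ scaling carried by the spectral problem'' without justification; since the eigenvalue problem here has an \emph{unweighted} $L^2$ right-hand side (unlike the $\tilde\kappa$-weighted one in \cite{chung2018constraint}), the raw Rayleigh bound is $(\gamma_{L_i+1}^{(i)})^{-1}a_{K_i}(v,v)$ with no explicit $H^2$, and the factor $H^2$ together with the contrast-independence of $\Lambda$ must come from how $\Lambda$ is normalized---a point the paper itself leaves undefined and that your sketch also does not pin down. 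Your treatment of the second inequality is plausible but essentially defers its two load-bearing ingredients (the $O(\kappa_1H^{-2})$ energy bound for each localized basis function and the contrast-free conditioning of the Gram matrix in the $b$-inner product) back to the reference, which is acceptable only because the lemma is itself a citation.
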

As a direct consequence of Lemma \ref{cem}, for any $v\in V_h$, it can be derived that
\begin{equation}\label{L.cem2}
\begin{aligned}
     \|v\|_{L^2(\Omega)}^2\geq \left(1+\frac{2\kappa_1}{\beta\Lambda}\right)\|v\|^2_{b}.
\end{aligned}
\end{equation}

\begin{lemma}
    Suppose the size of oversampling region $m$ satisfies the following condition as the coarse grid $H$ refines $m=O\left(log\left(\frac{\kappa_1}{H}\right)\right).$ Then, there holds
\begin{equation}\label{L.1}
\begin{aligned}
     \|(I-P_H)Gg\|_a\leq C\Lambda^{-\frac{1}{2}}H\|g\|_{L^2(\Omega)}~\forall g\in L^2(\Omega),
\end{aligned}
\end{equation}
and
\begin{equation}\label{L.2}
\begin{aligned}
     \|(I-P_H)Gg\|_{L^2(\Omega)}\leq C\Lambda^{-1}H^2\|g\|_{L^2(\Omega)}~\forall g\in L^2(\Omega).
\end{aligned}
\end{equation}
\end{lemma}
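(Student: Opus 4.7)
The plan is to prove the energy estimate \eqref{L.1} first by exhibiting an explicit approximant in $V_{H,1}^{(m)} + V_{H,2}^{(m)}$, and then deduce the $L^2$ estimate \eqref{L.2} by a standard Aubin--Nitsche duality argument. Since $P_H$ is the $a$-orthogonal projection onto $V_{H,1}^{(m)} + V_{H,2}^{(m)}$, C\'ea's lemma reduces \eqref{L.1} to constructing, for $u := Gg$, some $u_H \in V_{H,1}^{(m)} + V_{H,2}^{(m)}$ with $\|u - u_H\|_a \leq C\Lambda^{-1/2} H \|g\|_{L^2(\Omega)}$.

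I would carry out the construction in two steps, first globally and then by localization. For the global step, let $V_{glo} := V_{glo,1} \oplus V_{glo,2}$ and define $u_{glo} \in V_{glo}$ by $\pi u_{glo} = \pi u$; this determines $u_{glo}$ uniquely because \eqref{eq01}--\eqref{eq02} force $\pi$ to be an isomorphism from $V_{glo}$ onto $V_{\text{aux},1} \oplus V_{\text{aux},2}$. The crucial Galerkin-type identity, again coming from \eqref{eq01}--\eqref{eq02}, is $a(u_{glo}, v) = 0$ for every $v \in V_h$ with $\pi v = 0$. Setting $e := u - u_{glo}$ so that $\pi e = 0$, the defining equation of $Gg$ gives
\begin{equation*}
    a(e, e) = a(u, e) = (g, e)_{L^2(\Omega)} = (g, (I-\pi) e)_{L^2(\Omega)},
\end{equation*}
and Cauchy--Schwarz combined with Lemma \ref{cem} then yields $\|u - u_{glo}\|_a \leq C\Lambda^{-1/2} H \|g\|_{L^2(\Omega)}$. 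For the localization step, each global basis function is replaced by its oversampled analogue from \eqref{eq0}--\eqref{eq1}; a Caccioppoli-type energy argument over successive coarse-grid annuli shows that this substitution introduces an error of order $e^{-\alpha m}$ times a factor at worst polynomial in $\kappa_1$. The choice $m = O(\log(\kappa_1/H))$ then reduces the localization error to $O(H)\|g\|_{L^2(\Omega)}$, and the triangle inequality closes \eqref{L.1}. I expect this localization step to be the main obstacle, as the standard cutoff-and-minimize argument from the CEM-GMsFEM literature has to be tracked carefully enough that $\kappa_1$ enters the final estimate only through $m$ and the overall bound stays contrast-independent.

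For \eqref{L.2}, set $\phi := (I - P_H) Gg$ and introduce the dual solution $w := G\phi$, so that $a(w, v) = (\phi, v)_{L^2(\Omega)}$ for every $v \in V$. Using the $a$-orthogonality of $I - P_H$ to $V_{H,1}^{(m)} + V_{H,2}^{(m)}$,
\begin{equation*}
    \|\phi\|_{L^2(\Omega)}^2 = a(w, \phi) = a\bigl((I - P_H) w,\,(I - P_H) Gg\bigr) \leq \|(I - P_H) w\|_a \,\|(I - P_H) Gg\|_a.
\end{equation*}
Applying \eqref{L.1} once with source $\phi$ and once with source $g$ yields $\|\phi\|_{L^2(\Omega)}^2 \leq C \Lambda^{-1} H^2 \|\phi\|_{L^2(\Omega)} \|g\|_{L^2(\Omega)}$, which rearranges to \eqref{L.2}.
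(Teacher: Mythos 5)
Your argument is correct, and it is essentially the proof the paper is implicitly relying on: the paper states this lemma without proof, deferring to the CEM-GMsFEM analysis of \cite{chung2018constraint}, and your three ingredients --- the global Galerkin orthogonality $a(u_{glo},v)=0$ for $\pi v=0$ combined with the first inequality of Lemma \ref{cem} to get the $O(\Lambda^{-1/2}H)$ energy bound, the exponential-decay localization under $m=O(\log(\kappa_1/H))$, and the Aubin--Nitsche duality step for \eqref{L.2} --- are exactly the standard route there. The only part you leave as a sketch is the localization estimate, which is indeed where all the technical work (and the contrast-tracking you flag) lives; everything you do write out in full is sound.
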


Now, we aim to estimate the error between the solution $u^{n}$ and the coarse-scale solutions $u_{H}^{n}$. We define the quantities
\begin{equation}\label{2.1}
\begin{aligned}
     \epsilon^{n} = u^{n}-\sum_{i=1,2} u_{H,i}^{n},~
     \delta^{n}_i = u_{H,i}^{n}-P_{H,i}u^{n},~\delta^{n}=\sum_{i=1,2}\delta^{n}_i,~
     \eta^{n}_i = (I-P_{H,i})u^{n},~\eta^{n}=\sum_{i=1,2}\eta^{n}_i.
\end{aligned}
\end{equation}

\begin{theorem}\label{2.01}
Assuming $f\in C^4\big([0,T];L^2(\Omega))\cap C([0,T];H^1(\Omega)\big)$ and $u\in C^4\big([0,T];L^2(\Omega))\cap C^2([0,T];H(\Omega)\big),$ we have the following estimates
\begin{equation}\label{2.2}
\begin{aligned}
     \tau (R_1^n+R_3^n)+\Lambda^{-\frac{1}{2}}HR_2^n\leq C(\Lambda^{-1}H^2+\tau^2),
\end{aligned}
\end{equation}
where 
\begin{equation}\label{2.03}
\begin{aligned}
     R_1^n=&\sum_{k=1}^n\left\|\pi\left(\frac{\eta^{n+1}-2\eta^n+\eta^{n-1}}{\tau^2}\right)\right\|_{L^2(\Omega)}+ \sum_{k=1}^n\left\| (I-\pi)\frac{\partial^2 u}{\partial t^2}(\cdot,t_n)\right\|_{L^2(\Omega)}\\
     &+\sum_{k=1}^n\left\|\pi\left(\frac{\partial^2u}{\partial t^2}(\cdot,t_n)-\frac{u^{n+1}-2u^n+u^{n-1}}{\tau^2}\right)\right\|_{L^2(\Omega)},\\
     R_2^n=&\tau\sum_{k=1}^{n-1}\left\|(I-\pi)(\frac{f^{k+1}-f^k}{\tau})\right\|_{L^2(\Omega)}+\|(I-\pi)(f^1)\|_{L^2(\Omega)}\\
     &+\|(I-\pi)(f^n)\|_{L^2(\Omega)},\\
     R_3^n=&\sum_{k=1}^n\left\|\left(\frac{P_{H,1}u^{n+1}-2P_{H,1}u^n+P_{H,1}u^{n-1}}{2}\right)\right\|_a.
\end{aligned}
\end{equation}
\end{theorem}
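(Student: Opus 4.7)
The plan is to bound each of $R_1^n, R_2^n, R_3^n$ term by term, separating time-truncation error (handled by Taylor expansion in $t$) from spatial projection error (handled by Lemma \ref{cem} and by \eqref{L.1}--\eqref{L.2}). The outer prefactor $\tau$ combines with the sum length $n$ via $n\tau \leq T$, so per-step bounds of order $\Lambda^{-1}H^2$ or $\tau^2$ translate directly into the target right-hand side. Since $\pi$ and $P_H$ are time-independent, they commute with the discrete time-differencing operators, a fact I will use repeatedly; in addition, the wave equation $u_{tt}=\mathrm{div}(\kappa\nabla u)+f$ and the regularity hypothesis supply $Lu_{tt}=u_{tttt}-f_{tt}\in L^2$, which will be essential for converting projection errors on $u_{tt}$ into the form required by \eqref{L.2}.

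For $R_1^n$ I would handle its three sums separately. In the first sum, commuting $\pi$ past the divided difference and using $\eta = (I-P_H)u$ recasts the summand as $\pi(I-P_H)\tfrac{u^{k+1}-2u^k+u^{k-1}}{\tau^2}$; a Taylor expansion at $t_k$ reduces it to $\pi(I-P_H)u_{tt}(\cdot,t_k)$ plus an $O(\tau^2)\sup\|u_{tttt}\|_{L^2}$ remainder, and $L^2$-boundedness of $\pi$ together with \eqref{L.2} applied to $u_{tt}=G(Lu_{tt})$ yields an $O(\Lambda^{-1}H^2)$ bound per step. The third sum is the classical second-order time-truncation estimate $\|u_{tt}(\cdot,t_k)-\tfrac{u^{k+1}-2u^k+u^{k-1}}{\tau^2}\|_{L^2}\leq C\tau^2\sup\|u_{tttt}\|_{L^2}$. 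For the middle sum I would split $u_{tt}=P_H u_{tt}+(I-P_H)u_{tt}$: the second piece is controlled directly by \eqref{L.2}, while $(I-\pi)P_H u_{tt}$ is handled by exploiting the orthogonality of $\pi$ against the global basis defined in \eqref{eq01}--\eqref{eq02}, which lets one absorb it into a higher-order term rather than paying the crude $\Lambda^{-1/2}H$ cost of Lemma \ref{cem}. Summing and multiplying by $\tau$ yields $\tau R_1^n \leq C(\Lambda^{-1}H^2+\tau^2)$.

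For $R_2^n$, each contribution is $\|(I-\pi)g\|_{L^2}$ with $g$ a divided difference or a boundary value of $f$, so Lemma \ref{cem} immediately gives $\|(I-\pi)g\|_{L^2}\leq C\Lambda^{-1/2}H\|g\|_a$; the time sum telescopes into a bound by $T\sup_{[0,T]}\|f_t\|_a$ and the two boundary contributions are controlled by $\sup_{[0,T]}\|f\|_a$, so multiplying by the outer factor $\Lambda^{-1/2}H$ produces $\Lambda^{-1/2}H\,R_2^n\leq C\Lambda^{-1}H^2$. For $R_3^n$ I would Taylor-expand the unnormalised second difference of $P_{H,1}u$ at $t_k$ into $\tfrac{\tau^2}{2}P_{H,1}u_{tt}(\cdot,t_k)+O(\tau^4)$; using the $a$-boundedness of the Galerkin-type projection $P_{H,1}$ (inherited from the CEM decomposition and the angle bound $\gamma_a<1$), each summand is $O(\tau^2\|u_{tt}\|_a)$, and therefore $\tau R_3^n\leq Cn\tau^3\leq CT\tau^2$.

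The main obstacle is the $(I-\pi)u_{tt}$ term inside $R_1^n$: Lemma \ref{cem} alone gives only the suboptimal $O(\Lambda^{-1/2}H)$ rate per step, which after summation would fail to match the claimed $\Lambda^{-1}H^2$. Recovering the sharper rate hinges on replacing the direct Poincaré-type bound by a duality argument of the kind underpinning \eqref{L.2}, combined with the wave equation to control $Lu_{tt}\in L^2$ and with the orthogonality properties of $\pi$ against the global multiscale basis built in \eqref{eq01}--\eqref{eq02}. This is the step where the specific structure of $V_{\mathrm{aux},1}\oplus V_{\mathrm{aux},2}$ must be leveraged in an essential way, and it is where I would focus most of the proof's effort.
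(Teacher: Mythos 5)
Your treatment of the first and third sums in $R_1^n$, of $R_2^n$, and of $R_3^n$ is essentially the paper's own argument. The paper sets $g=f-\frac{\partial^2 u}{\partial t^2}$ so that $u^n=G(g(\cdot,t_n))$ and $\eta^n=(I-P_H)Gg(\cdot,t_n)$; it then Taylor-expands $g$ in time to fourth order, applies \eqref{L.2} to the second divided difference of $g$ (whose limit is $g_{tt}=f_{tt}-u_{tttt}=Lu_{tt}$, exactly the quantity you identify), and obtains the per-step bound $C\Lambda^{-1}H^2(\cdot)+C\tau(\cdot)$ of \eqref{T2.3}; the third sum is the classical $O(\tau^2)\sup\|u_{tttt}\|_{L^2}$ truncation bound \eqref{T2.4}; and $R_3^n$ is controlled through $\|s_1^n\|_a\leq \frac{C\tau^2}{2(1-\gamma_a)}\sup_t\|u_{tt}(\cdot,t)\|_a$, i.e.\ the same $a$-stability of $P_{H,1}$ via $\gamma_a<1$ that you invoke. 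Your $R_2^n$ estimate via Lemma \ref{cem} and $n\tau\leq T$ is also what the paper intends (it is not written out there).

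The genuine divergence is the middle term $\sum_{k=1}^n\|(I-\pi)\frac{\partial^2 u}{\partial t^2}(\cdot,t_k)\|_{L^2(\Omega)}$ of $R_1^n$, and here neither you nor the paper closes the argument. The paper's proof simply restates $R_1^n$ in \eqref{2.3} \emph{without} this term and never estimates it. You correctly flag it as the obstacle, but your proposed repair does not hold up: splitting $u_{tt}=P_Hu_{tt}+(I-P_H)u_{tt}$ handles the second piece at rate $\Lambda^{-1}H^2$ via \eqref{L.2}, but for $(I-\pi)P_Hu_{tt}$ the orthogonality relations from \eqref{eq01}--\eqref{eq02} only say that $\pi$ maps $V_{glo}$ bijectively onto $V_{\text{aux},1}\oplus V_{\text{aux},2}$; they do not make $(I-\pi)$ vanish on $V_H$ to higher order, and indeed \eqref{masslumping} shows $\|(I-\pi)\phi_{glo}\|_{L^2}=O(H)$, consistent with the sharp rate $\Lambda^{-1/2}H\|P_Hu_{tt}\|_a$ from Lemma \ref{cem}. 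Since this term is summed over $n\sim T/\tau$ steps with only a $\tau$ prefactor, it contributes $O(\Lambda^{-1/2}H)$ to $\tau R_1^n$, which does not match the claimed $O(\Lambda^{-1}H^2+\tau^2)$. The resolution is structural rather than analytic: in the proof of Theorem \ref{2.02} this contribution (part of $r^n$ in \eqref{2.220}) is paired with $(I-\pi)(\delta^{k+1}+\delta^k)$, which supplies the missing factor of $\Lambda^{-1/2}H$, so the term belongs in $R_2^n$ (where each summand need only be $O(\Lambda^{-1/2}H)$), not in $R_1^n$. You should either move it there or state explicitly that no $\Lambda^{-1}H^2$ bound is available for it as placed.
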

The proof of Theorem \ref{2.01} is provided in \ref{appb}.

\begin{theorem}\label{2.02}
Assume $f\in C^4\big([0,T];L^2(\Omega))\cap C([0,T];H^1(\Omega)\big)$ and $u\in C^4\big([0,T];L^2(\Omega))\cap C^2([0,T];H(\Omega)\big),$ we have the following estimates
\begin{equation}\label{2.4}
\begin{aligned}
     \max_{0\leq k\leq N_{T}-1}\left\|\frac{\epsilon^{n+1}+\epsilon^k}{2}\right\|_{L^2(\Omega)}\leq C(\Lambda^{-1}H^2+\tau^2).
\end{aligned}
\end{equation}
\end{theorem}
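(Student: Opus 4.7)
\smallskip

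The plan is to bound the total error $\epsilon^n = u^n - u_H^n$ through the standard splitting
\[
\epsilon^n = \eta^n - \delta^n, \qquad \eta^n = (I-P_H)u^n, \qquad \delta^n = u_H^n - P_H u^n,
\]
and estimate each piece separately. The projection part $\eta^n$ is harmless: applying the elliptic approximation estimate \eqref{L.2} to $u^n$ (which, through the continuous wave equation, can be written in terms of $G$ applied to $\partial_t^2 u - f$) yields $\|\eta^n\|_{L^2(\Omega)} \leq C\Lambda^{-1}H^2$, provided $u\in C^2([0,T];H^1(\Omega))$ and $f\in C([0,T];L^2(\Omega))$. So the heart of the proof is to control $\delta^n$, which lives in the multiscale space $V_{H,1}\oplus V_{H,2}$ and satisfies a perturbed partially explicit wave scheme.

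Next I would derive the error equation for $\delta_1^n\in V_{H,1}$ and $\delta_2^n\in V_{H,2}$. Plugging $u^n = P_H u^n + \eta^n$ into the continuous equation tested against $V_{H,1}$ and $V_{H,2}$, then subtracting the partially explicit scheme \eqref{1.1}--\eqref{1.2} (now with source $f$), gives two equations analogous to \eqref{1.1}--\eqref{1.2} but with right-hand sides consisting of (i) the time-truncation residual $\pi(\partial_t^2 u - D_\tau^2 u)$, (ii) the mass-lumping residual $(I-\pi)\partial_t^2 u$ and $(I-\pi)f$, (iii) the projection residual $\pi D_\tau^2 \eta^n$, and (iv) the explicit/implicit splitting residual controlled by the term $R_3^n$ from \eqref{2.03}. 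These are precisely the consistency quantities packaged into $R_1^n$, $R_2^n$, and $R_3^n$ in Theorem \ref{2.01}.

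I would then run the same energy identity that produced Lemma \ref{lemma1}, but now with a source term. Defining the perturbed energy $\mathcal{E}^{n+1/2}$ for $\delta$ by the formula \eqref{1.3} and testing the discrete error equations with $\delta_1^{n+1}-\delta_1^{n-1}$ and $\delta_2^{n+1}-\delta_2^{n-1}$ respectively, the left side telescopes into $\mathcal{E}^{n+1/2}-\mathcal{E}^{n-1/2}$ while the right side is bounded by the consistency terms paired against $(\delta^{n+1}-\delta^{n-1})$. Under the CFL \eqref{eq:CFL} and $\gamma_a<1$, the energy $\mathcal{E}^{n+1/2}$ is equivalent to $\|D_\tau\delta^{n+1/2}\|_b^2 + \tfrac{\tau^2}{2}\sum_i\|\delta_i\|_a^2$. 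Summing from step $1$ to $n$ and applying a discrete Gronwall/Cauchy--Schwarz argument converts the right side into $C\tau(R_1^n+R_3^n)+C\Lambda^{-1/2}HR_2^n$, which by Theorem \ref{2.01} is $\leq C(\Lambda^{-1}H^2+\tau^2)$. Passing from the $b$-norm on $\delta$ to an $L^2$-norm via \eqref{L.cem2} and combining with the $\eta$ bound yields the conclusion, after averaging $\tfrac12(\epsilon^{n+1}+\epsilon^n)$ as in \eqref{2.4}.

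The main obstacle I anticipate is the bookkeeping in the summation-by-parts step: because the scheme is partially explicit with the splitting parameter $\omega$ and because the mass is lumped (so the inner product is $b$ instead of $L^2$), the cross-term $a(\delta_{H,2}^{n+1},\delta_{H,1}^n)+a(\delta_{H,1}^{n+1},\delta_{H,2}^n)$ in $\mathcal{E}^{n+1/2}$ must be absorbed carefully using $\gamma_a<1$, and the source terms arising from $(I-\pi)$ on $f$ and $\partial_t^2 u$ must be paired against $\pi$-part of $\delta$ to avoid losing factors of $\kappa_1$. In particular, getting a clean $\Lambda^{-1/2}H$ weight in front of $R_2^n$, rather than an $H$-independent constant, requires exploiting \eqref{L.cem1} to trade the $a$-norm of $\delta$ for the $b$-norm at the right moment; this is the step where the contrast-independence of the final bound is actually earned.
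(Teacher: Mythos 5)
Your setup matches the paper's: the splitting $\epsilon^n=\eta^n-\delta^n$ via the elliptic projection, the identification of the residuals $R_1^n,R_2^n,R_3^n$ (time truncation, mass-lumping/$(I-\pi)$ terms, splitting term), and the appeal to Theorem \ref{2.01} are all exactly what the paper does. The gap is in the core estimate for $\delta$. You propose to test the error equations with $\delta_i^{n+1}-\delta_i^{n-1}$ and telescope the Lemma \ref{lemma1} energy $\mathcal{E}^{n+\frac12}$. That energy controls the \emph{increment} $\|\delta^{n+1}-\delta^{n}\|_b$ and the scaled quantities $\tau\|\delta_i^{n}\|_a$, not $\|\delta^{n}\|_b$ itself; to reach the stated $L^2$ bound you must either sum $N_T=T/\tau$ increments (costing a factor $\tau^{-1}$) or divide the $a$-norm part of the energy by $\tau$. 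For the $\pi$-part of the residual this is survivable, since $\tau R_1^n\le C(\Lambda^{-1}H^2+\tau^2)$ supplies the extra power of $\tau$. But the $(I-\pi)$ residuals (the $R_2^n$ terms, e.g.\ $(I-\pi)f^k$) are $L^2$-orthogonal to $\pi\delta$ and can only be paired against $(I-\pi)(\delta^{k+1}-\delta^{k-1})$, which by Lemma \ref{cem} is bounded by $\Lambda^{-1/2}H\|\delta^{k+1}-\delta^{k-1}\|_a\lesssim \Lambda^{-1/2}H\,\tau^{-1}\sqrt{\mathcal{E}}$; the factor $\tau^{-1}$ cancels the gain, and even after an Abel summation in time one lands at $\sqrt{\mathcal{E}}=O(\tau\Lambda^{-1/2}H)$, hence $\|\delta^n\|_b=O(\Lambda^{-1/2}H)$ --- first order in $H$, not the claimed $\Lambda^{-1}H^2$. (Incidentally, the $\Lambda^{-1/2}H$ weight on $R_2^n$ comes from the first inequality of Lemma \ref{cem} applied to $(I-\pi)\delta$, not from the inverse inequality \eqref{L.cem1}, which points in the wrong direction, and the $(I-\pi)$ residuals pair against the $(I-\pi)$-part of $\delta$, not its $\pi$-part as you wrote.)

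The paper's proof avoids this obstruction with a different test function: it first telescopes the error equation in time to obtain an equation for $\frac{\delta^{n+1}-\delta^{n-1}}{\tau}$ and the summed quantity $\Delta^n=\tau\sum_{k\le n}\delta^k$, and then tests with $\Delta_i^{n+1}-\Delta_i^{n-1}=\tau(\delta_i^{n+1}+\delta_i^{n})$. This places $\|\delta^{n+1}\|_b^2$ and $\bigl\|\tfrac{\Delta^{n+1}+\Delta^n}{2}\bigr\|_a^2$ directly on the left-hand side (see \eqref{2.12}--\eqref{2.13}), so the $(I-\pi)$ residuals pair against $(I-\pi)\bigl(\tfrac{\delta^{k+1}+\delta^k}{2}\bigr)\lesssim\Lambda^{-1/2}H\|\delta^k\|_a$ with no $\tau^{-1}$, and the $\tau^2\|\delta_1^{k}\|_a^2$ term on the left absorbs it by Young's inequality. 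This time-integrated (Baker-type) test function is the essential device your proposal is missing; without it the velocity-energy argument does not close at the stated rate.
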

The proof of Theorem \ref{2.02} is provided in \ref{appd}.

\section{The implementation of partially explicit Runge-Kutta method for wave equations}\label{005}
In this section, we develop the partially explicit Runge-Kutta scheme for the system (\ref{0.21}).
Denote $r_H=(u_H)_t$, (\ref{0.21}) can be rewritten as
\begin{equation}\label{rk1}
\left\{
\begin{aligned}
\left(\frac{\partial u_H}{\partial t},v\right) &= (r_H,v),\\
\left(\frac{\partial r_H}{\partial t} ,v\right)&= -a(u_H,w)+(f,w).
\end{aligned}
\right.
\end{equation}

Denote $M$ and $A$ as the matrices of $L^2$ inner product $(u,v)_{L^2(\Omega)}$ and bilinear form $a(u,v)$ with respect to the coarse grid basis functions in $V_H$. Let $U^n,~R^n$ and $F^n$ be the $n$-th time step column vector consisting of the coordinate representation of $u_H^n,r_H^n\in V_{H,1}\oplus V_{H,2}$, and the source term $f$ with respect to the basis functions $\phi_{j,1}^{(i)}$ and $\phi_{j,2}^{(i)}$. Then (\ref{rk1}) can be rewritten as the matrix form
% \begin{equation}\label{rk3}
% \left\{
% \begin{aligned}
% M\frac{\partial R_h}{\partial t} &= R,\\
% M\frac{\partial U_h}{\partial t} &= -\text{AU}_h+F.
% \end{aligned}
% \right.
% \end{equation}
\begin{equation}\label{rk31}
\begin{aligned}
\left(\begin{matrix}
    M&0\\0&M
\end{matrix}\right)\frac{\partial}{\partial t}\left(\begin{matrix}
    U^n\\R^n
\end{matrix}\right)=\left(\begin{matrix}
    0&M\\-A&0
\end{matrix}\right)\left(\begin{matrix}
    U^n\\R^n
\end{matrix}\right)+\left(\begin{matrix}
    0\\MF^n
\end{matrix}\right).
\end{aligned}
\end{equation}
 Thus, we have 
\begin{equation}\label{rk4}
\begin{aligned}
\frac{\partial}{\partial t}\left(\begin{matrix}
    U^n\\R^n
\end{matrix}\right)=\left(\begin{matrix}
    0&I\\-M^{-1}A&0
\end{matrix}\right)\left(\begin{matrix}
    U^n\\R^n
\end{matrix}\right)+\left(\begin{matrix}
    0\\F^n
\end{matrix}\right),
\end{aligned}
\end{equation}
where $I$ is the identity matrix. 
After spatial splitting, let $M_{ij}=(u_i,v_j)$ and $A_{ij}=a(u_i,v_j),~1\leq i,j\leq2$, where $u_i\in V_{H,i}$, $v_j\in V_{H,j}$. Denote $U_i^n$ and $R_i^n,~i=1,2$ are the $n$-th time step column vectors consisting of the coordinate representation of the basis functions in $V_{H,1}$ and  $V_{H,2}$. $F^n$ denotes as the $n$-th time step column vector consisting of the coordinate representation of the source term $f$.
By using the mass lumping scheme \eqref{masslumping}, we can respectively choose
\begin{equation}\label{rk5}
\begin{aligned}
y_1\left(\begin{matrix}
    U^n_1\\R^n_1
\end{matrix}\right)=\left(\begin{matrix}
    0&I\\-M_{11}^{-1}A_{11}&0
\end{matrix}\right)\left(\begin{matrix}
    U^n_1\\R^n_1
\end{matrix}\right)+\left(\begin{matrix}
    0&M_{11}^{-1}M_{12}\\-M_{11}^{-1}A_{12}&0
\end{matrix}\right)\left(\begin{matrix}
    U^n_2\\R^n_2
\end{matrix}\right)+\left(\begin{matrix}
    0\\F_1^n
\end{matrix}\right)
\end{aligned}
\end{equation}
in $V_{H,1}$ (implicit), and
\begin{equation}\label{rk7}
\begin{aligned}
y_2\left(\begin{matrix}
    U^n_2\\R^n_2
\end{matrix}\right)=\left(\begin{matrix}
    0&M_{22}^{-1}M_{21}\\-M_{22}^{-1}A_{21}&0
\end{matrix}\right)\left(\begin{matrix}
    U^n_1\\R^n_1
\end{matrix}\right)+\left(\begin{matrix}
    0&I\\-M_{22}^{-1}A_{22}&0
\end{matrix}\right)\left(\begin{matrix}
    U^n_2\\R^n_2
\end{matrix}\right)+\left(\begin{matrix}
    0\\F_2^n
\end{matrix}\right)
\end{aligned}
\end{equation}
in $V_{H,2}$ (explicit). Here, $M_{11},M_{22}$ are both diagonal matrices, while $M_{21}$ and $M_{12}$ are zero. 
Thus, there exist constants $C_1=M_{11}^{-1}$ and $C_2=M_{22}^{-1}$ such that
\begin{equation}\label{rkh1}
\begin{aligned}
y_1\left(\begin{matrix}
    U^n_1\\R^n_1
\end{matrix}\right)=C_1\left(\begin{matrix}
    0&I\\-A_{11}&0
\end{matrix}\right)\left(\begin{matrix}
    U^n_1\\R^n_1
\end{matrix}\right)+C_1\left(\begin{matrix}
    0&0\\-A_{12}&0
\end{matrix}\right)\left(\begin{matrix}
    U^n_2\\R^n_2
\end{matrix}\right)+\left(\begin{matrix}
    0\\F_1^n
\end{matrix}\right),
\end{aligned}
\end{equation}
and
\begin{equation}\label{rkh2}
\begin{aligned}
y_2\left(\begin{matrix}
    U^n_2\\R^n_2
\end{matrix}\right)=C_2\left(\begin{matrix}
    0&0\\-A_{21}&0
\end{matrix}\right)\left(\begin{matrix}
    U^n_1\\R^n_1
\end{matrix}\right)+C_2\left(\begin{matrix}
    0&I\\-A_{22}&0
\end{matrix}\right)\left(\begin{matrix}
    U^n_2\\R^n_2
\end{matrix}\right)+\left(\begin{matrix}
    0\\F_2^n
\end{matrix}\right).
\end{aligned}
\end{equation}

The Butcher tableau for the Runge-Kutta method can be found in \cite{ascher1997implicit}. In our numerical examples, we employ a four-stage, third order partially explicit Runge-Kutta scheme, which is detailed in the Table \ref{RK4}. For comparison, we also utilize a fully implicit Runge-Kutta scheme, presented in the left portion of Table \ref{RK4}. This allows us to evaluate the performance differences between the fully implicit and partially explicit schemes.
\begin{table}[H]
   % \centering
    \begin{minipage}{0.49\textwidth}
        % \centering
        \bgroup
        \renewcommand{\arraystretch}{1.5}
        $$
        \begin{array}{c|cccc}
        \frac{1}{2} & \frac{1}{2} & 0 & 0 & 0 \\
        \frac{2}{3} & \frac{1}{6} & \frac{1}{2} & 0 & 0 \\
        \frac{1}{2} & -\frac{1}{2} & \frac{1}{2} & \frac{1}{2} & 0 \\
        1 & \frac{3}{2} & -\frac{3}{2} & \frac{1}{2} & \frac{1}{2} \\
        \hline
         & \frac{3}{2} & -\frac{3}{2} & \frac{1}{2} & \frac{1}{2}
        \end{array}
        $$
        \egroup
    \end{minipage}
    \begin{minipage}{0.49\textwidth}
        % \centering
        \bgroup
        \renewcommand{\arraystretch}{1.5} 
        $$
        \begin{array}{c|ccccc}
        0 & 0 & 0 & 0 & 0 & 0 \\
        \frac{1}{2} & \frac{1}{2} & 0 & 0 & 0 & 0 \\
        \frac{2}{3} & \frac{11}{18} & \frac{1}{18} & 0 & 0 & 0 \\
        \frac{1}{2} & \frac{5}{6} & -\frac{5}{6} & \frac{1}{2} & 0 & 0 \\
        1 & \frac{1}{4} & \frac{7}{4} & \frac{3}{4} & -\frac{7}{4} & 0 \\
        \hline
         & \frac{1}{4} & \frac{7}{4} & \frac{3}{4} & -\frac{7}{4} & 0
        \end{array}
        $$
        \egroup
    \end{minipage}
    \caption{The Butcher tableau of a four-stage, third order partially explicit Runge-Kutta scheme.}
    \label{RK4}
\end{table}

Then, one step from $t_{n}$ to $t_{n+1}=t_n+\tau$ of the partially explicit scheme is given as follows

Set $\tilde{K}_1=y_2(u_n)$.

For $i=1,\dots,s$ do:

$\bullet$ Solve for $K_i$:

$K_i=y_1(u_i),$ where $u_i=u_n+\tau\sum_{j=1}^{i}a_{i,j}K_j+\tau\sum_{j=1}^{i}\tilde{a}_{i,j}\tilde{K}_j$.

$\bullet$ Evaluate

$\tilde{K}_{i+1}=y_2(u_i)$.

Finally, evaluate $u_{n+1}=u_n+\tau\sum_{j=1}^sb_jK_j+\tau\sum_{j=1}^{\sigma}\tilde{b}_j\tilde{K}_j.$

\section{Numerical examples}\label{006}
 In this section, we present numerical examples with three different high-contrast media and two source terms to demonstrate the convergence of our proposed method (the source terms are shown in Figure \ref{fig:C3}).
% The goal is to show that our proposed algorithm is accurate and the cell problem solutions provide better accuracy as we increase oversampling size.
We take the spatial domain $\Omega=[0,1]^2$. To enable a comparison of convergence rates, we first introduce some necessary notions.
% , and refer to the case (Case 1 refers to the first media, Case 2 refers to the second media, Case 3 refers to the Marmousi model), the source term (Case 1 refers to the first source term $f$, Case 2 and Case 3 refers to the second source term $f_2$), and the frequency $z_0$
% The following mesh and time step parameters
% $\mathrm{d}t=0.001,~T=1,$
% are used in our examples. We consider the media parameter $\kappa$ (see Fig.\ref{fig:0}) and source term (see Fig.\ref{fig:f01})  Both media parameters are high-contrast and multiscale. 
The following error quantities are used to compare the convergence of our algorithm:
\begin{equation}\label{n1}
\begin{aligned}
e_2&=\frac{\|u_h-\sum_{i=1,2}u_{H,i}\|_{L^2(\Omega)}}{\|u_h\|_{L^2(\Omega)}},~e_a=\frac{\|u_h-\sum_{i=1,2}u_{H,i}\|_{a}}{\|u_h\|_{a}},\\
e_b&=\frac{\|u_h-\sum_{i=1,2}u_{H,i}\|_{b}}{\|u_h\|_{b}}.
\end{aligned}
\end{equation}
To obtain the convergence rate of the central difference scheme and the Runge-Kutta scheme, we use the time step sizes $\tau_l= 2^{-l}\times5\times10^{-3}$ for $0\leq l\leq6$. The reference solution is chosen as the solution computed with the smallest time step size, which is $\tau=2^{-6}\times5\times10^{-3}$. Let $u^{c}_{l}$ be the final solution obtained using the central difference scheme with the $l$-th time step size $\tau_l$, and let $u^{c}_{s}$ be the solution computed with the smallest time step size  using central difference scheme. Then the central difference temporal error $e_c$ can be defined as follows
\begin{equation}\label{err2}
\begin{aligned}
e^{c}_{l}&=\frac{\|u^{c}_{l}-u^{c}_{s}\|_{L^2(\Omega)}}{\|u^{c}_{s}\|_{L^2(\Omega)}}.
\end{aligned}
\end{equation}
Similarly, let $u^{r}_{l}$ be the final solution obtained with the $l$-th time step $\tau_l$ using Runge-Kutta scheme, and let $u^{r}_{s}$ be the error with the smallest time step using Runge-Kutta scheme. Then the Runge-Kutta temporal error $e^{r}_l$ can be defined as follows
\begin{equation}\label{err22}
\begin{aligned}
e^{r}_l&=\frac{\|u^{r}_{l}-u^{r}_{s}\|_{L^2(\Omega)}}{\|u^{r}_{s}\|_{L^2(\Omega)}}.
\end{aligned}
\end{equation}

% We test the number of basis functions, and compute the error in the three norms in \eqref{n1}, which is shown as follows
% \begin{figure}[H]
%      \centering
%      \includegraphics[width=0.5\linewidth]{}
%      \caption{Error convergence comparison for varying number of basis functions.}
%      \label{fig:basis}
%  \end{figure}
% Figure \ref{fig:basis} depicts that the error decreases sharply between 2 and 3 basis functions, and remains nearly constant thereafter. This indicates that only 3 basis functions are sufficient to capture the dominant high-contrast features and provide contrast-independent approximation properties. Therefore, in our numerical examples, we will use $3$ basis functions per coarse element in $V_{\text{aux},2}$, and subsequently construct the corresponding localized multiscale basis functions.

 \begin{figure}[H]
    \centering
    \begin{subfigure}{0.42\textwidth}
        \includegraphics[width=\linewidth]{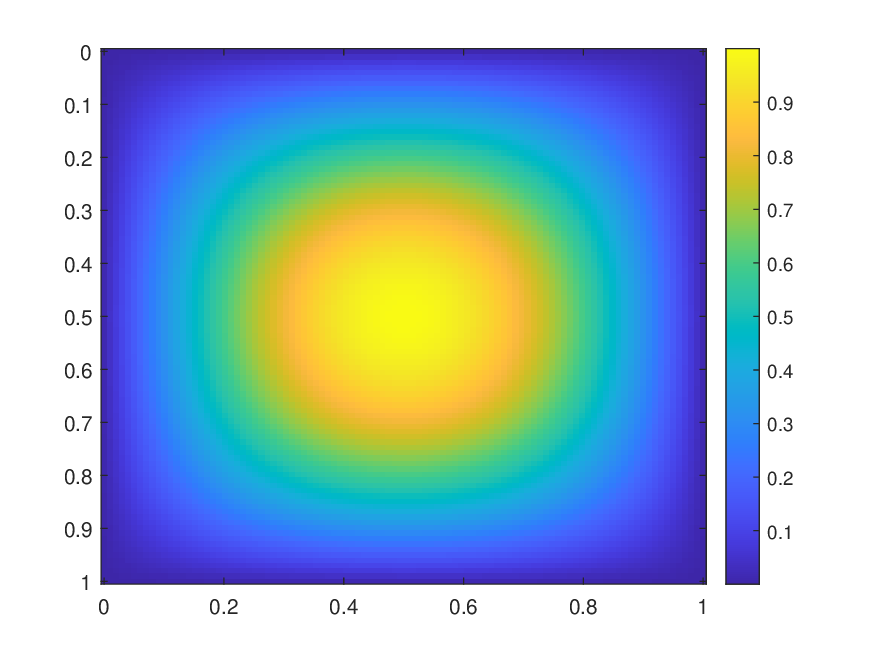}
        \caption{}
        \label{fig:C31}
    \end{subfigure}
    \begin{subfigure}{0.42\textwidth}
        \includegraphics[width=\linewidth]{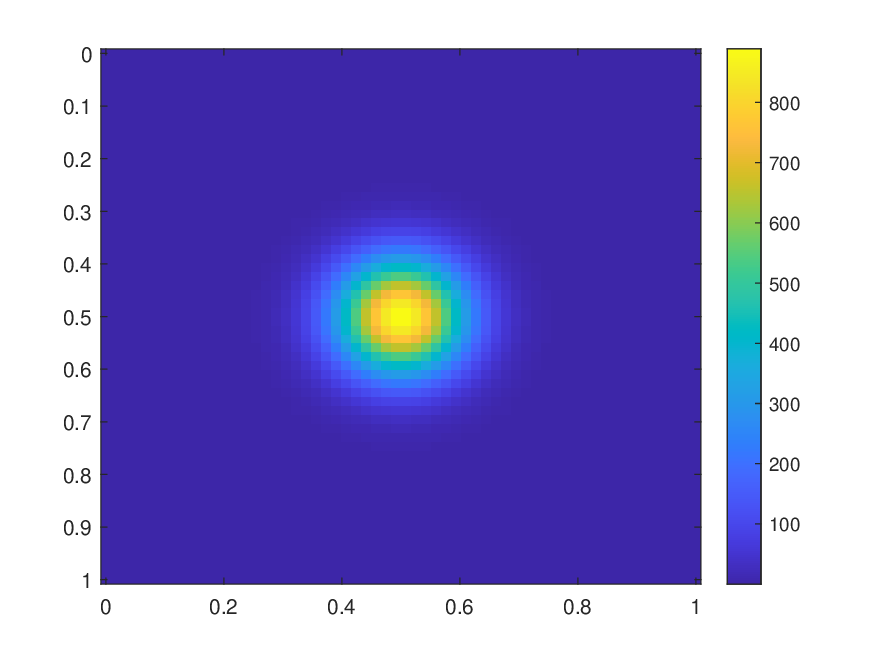}
        \caption{}
        \label{fig:C32}
    \end{subfigure}
    \caption{Left: the source term $f$. Right: the source term $f_2$.}
    \label{fig:C3}
\end{figure}

\subsection{Case 1}
In our first numerical example, the source function $f$ is considered as 
\begin{equation}\label{f21}
\begin{aligned}
f(t,x,y)=\sin(z_0t)\sin(\pi x)\sin(\pi y),
\end{aligned}
\end{equation}
for all $(t,x,y)\in \Omega$, where the fine grid parameter and central frequency are chosen as $h=1/100$ and $z_0=300$ (Figure \ref{fig:C3}).
For this case, we set $\kappa_{\text{cutoff}}=1$ and choose the coarse grid size as $H=1/10$. Using the fully discrete scheme, we solve for the numerical solution at the final time $T=0.4$, with the time step size $\tau=2.5\times10^{-3}$ (see Figure \ref{fig:k1}). We plot the reference solution computed on the fine grid with the fully implicit scheme (upper right), the solution computed by CEM-GMsFEM with the partially explicit scheme via mass lumping using the central difference scheme (lower left) and the Runge-Kutta scheme (lower right). 
From Table \ref{tablek} and Table \ref{tableMKc300}, we compare the performance of different choices of contrast values $\kappa$ and implicit and partially explicit time discretization schemes respectively. It shows that for different contrast values, errors remain consistent in the $L^2$ norm, energy norm and $b$-norm, indicating that the convergence rate is independent of the contrast (since the basis functions representing the channels are included in the $V_{\text{aux,1}}$ space). 
By employing \eqref{fully implicit} for the fully implicit central difference scheme, and using \eqref{rkh1}, \eqref{rkh2}, along with the left portion of Table \ref{RK4} for the fully implicit Runge-Kutta scheme, both the fully implicit and partially explicit schemes provide similar results.
Additionally, using \eqref{fully implicit}, \eqref{rkh1}, \eqref{rkh2}, and Table \ref{RK4}, we find that both the fully implicit and partially explicit schemes yield similar results. However, for the fully explicit scheme, stability is only achieved until the time step size is reduced to $7.8125\times 10^{-5}$, which significantly increases computational cost.
It is also observed that after applying mass lumping, the results closely match the original solution (see Table \ref{tableMKc30} and Figure \ref{fig:mll}). From Table \ref{tableKPP20}, Table \ref{tablec0} and Figure \ref{fig:cc21}, it is evident that the second order central difference scheme converges more slowly than the third order Runge-Kutta scheme, and the average convergence rates align well with theoretical predictions.
\begin{figure}[H]
    \centering
    \begin{subfigure}[b]{0.37\textwidth}
        \includegraphics[width=\linewidth]{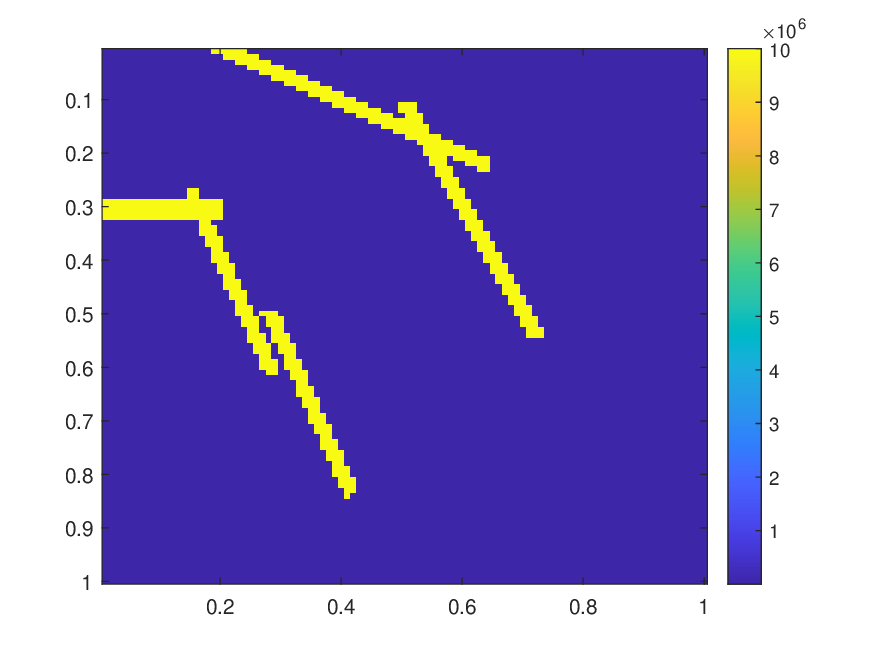}
        \caption{}
        \label{fig:image011}
    \end{subfigure}
    \begin{subfigure}[b]{0.37\textwidth}
        \includegraphics[width=\linewidth]{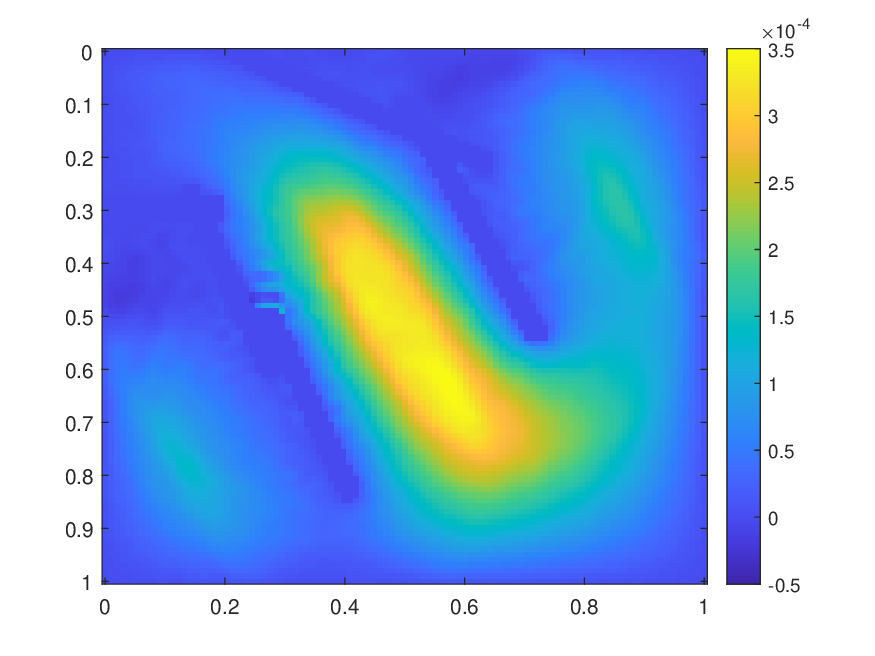}
        \caption{}
        \label{fig:image021}
    \end{subfigure}

    \begin{subfigure}[b]{0.37\textwidth}
        \includegraphics[width=\linewidth]{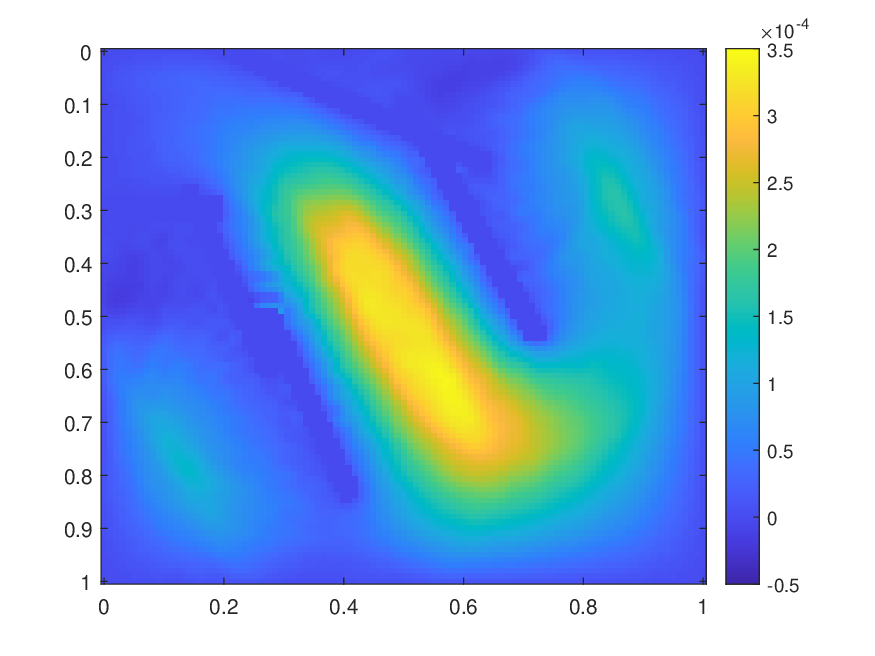}
        \caption{}
        \label{fig:image031}
    \end{subfigure}
    \begin{subfigure}[b]{0.37\textwidth}
        \includegraphics[width=\linewidth]{figure/f3rk.eps}
        \caption{}
        \label{fig:image041}
    \end{subfigure}
    \caption{Upper left: the first media $\kappa_1$. Upper right: reference solution for Case 1. Lower left: second order central difference solution for Case 1. Lower right: third order Runge-Kutta solution for Case 1.}
    \label{fig:k1}
\end{figure}
\begin{table}[H]
\centering
\begin{tabular}{c| c|c c c |c |c c c}
$\kappa$(contrast)&~& $e_2$&$e_a$&$e_b$&~& $e_2$&$e_a$&$e_b$ \\
\hline
$10^{7}$ &CD&3.92\% & 9.13\%&3.51\% &RK&3.55\%&8.54\%&3.46\%\\
\hline
$10^{6}$ &CD&3.92\% &9.13\%&3.51\%&RK&3.55\%&8.54\%&3.46\%\\
\hline
$10^{4}$ &CD&3.92\% &9.13\%&3.51\%&RK&3.55\%&8.54\%&3.46\%\\
\hline
\end{tabular}
\caption{The convergence of different contrast values $\kappa$ computed by the central difference scheme (CD) and the Runge-Kutta scheme (RK).}
 \label{tablek}
\end{table}
\begin{table}[H]
\centering
\begin{tabular}{c|c|c}
~ & Implicit & Partially explicit  \\
\hline
Central Difference& 2.90\% & 3.92\%  \\
\hline
Runge-Kutta & 2.78\% & 3.55\%  \\
\hline
\end{tabular}
\caption{The comparison of time discretization scheme.}
 \label{tableMKc300}
\end{table}
\begin{table}[H]
\centering
\begin{tabular}{c|c|c}
~ & Without mass lumping & With mass lumping  \\
\hline
Central Difference& 2.07\% & 3.92\%  \\
\hline
Runge-Kutta & 2.01\% & 3.55\%  \\
\hline
% (f3) Second order central difference& 3 & $\frac{1}{10}$ & 0.50\% \\
% \hline
% (f3) 4-th order Runge Kutta & 3 & $\frac{1}{10}$ & 0.50\%\\
% \hline
\end{tabular}
\caption{The comparison of the mass lumping scheme.}
 \label{tableMKc30}
\end{table}
 \begin{table}[H]
\centering
\begin{tabular}{c|c}
  ~ & Average convergence rate \\
\hline
Central Difference & 2.0947 \\
\hline
Runge-Kutta & 3.0656 \\
\hline
% (f3) Second order central difference& 3 & $\frac{1}{10}$ & 0.50\% \\
% \hline
% (f3) 4-th order Runge Kutta & 3 & $\frac{1}{10}$ & 0.50\%\\
% \hline
\end{tabular}
\caption{The comparison of the convergence rate.}
 \label{tableKPP20}
\end{table}
\begin{figure}[H]
    \centering
    \begin{subfigure}{0.42\textwidth}
        \includegraphics[width=\linewidth]{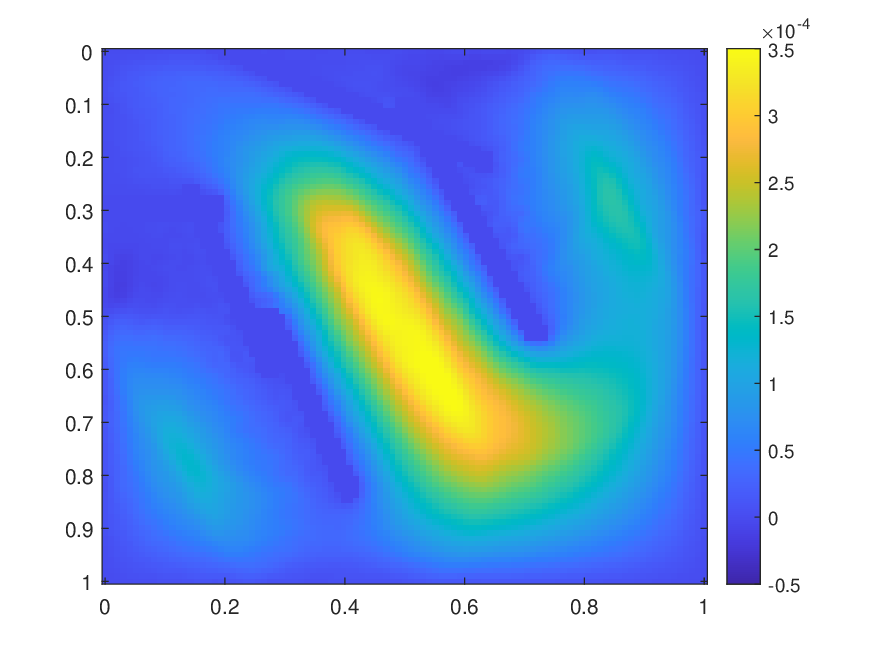}
        \caption{}
        \label{fig:image21c0}
    \end{subfigure}
    \begin{subfigure}{0.42\textwidth}
        \includegraphics[width=\linewidth]{figure/f3rk.eps}
        \caption{}
        \label{fig:image22c0}
    \end{subfigure}
    \caption{Left: solution via Runge-Kutta scheme without mass lumping. Right: solution via Runge-Kutta scheme with mass lumping.}
    \label{fig:mll}
\end{figure}
\begin{figure}[H]
     \centering
     \includegraphics[width=0.45\linewidth]{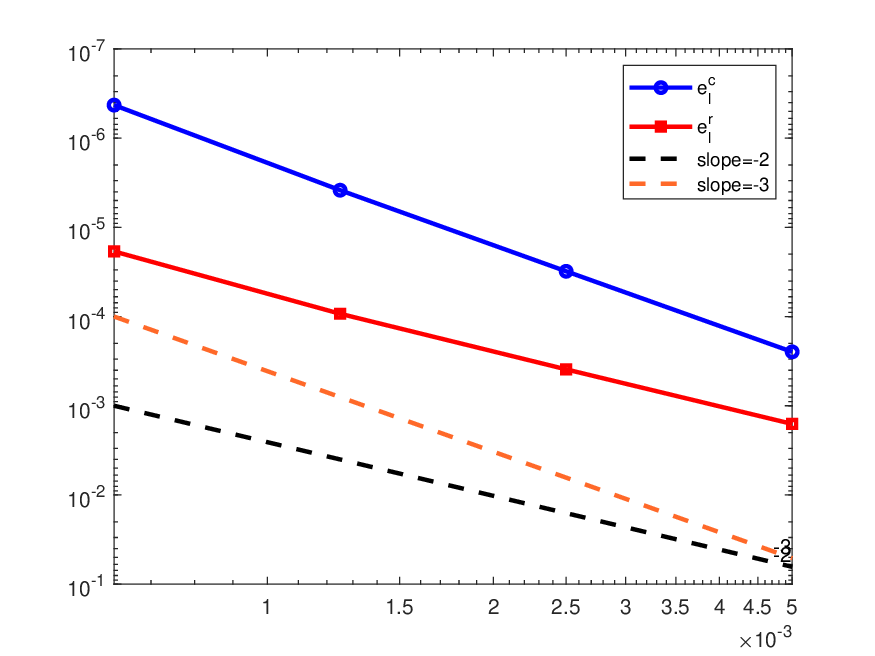}
     \caption{Convergence rate of the central difference and Runge-Kutta scheme.}
     \label{fig:cc21}
 \end{figure}

 \begin{table}[H]
\centering
\begin{tabular}{c|c|c}
~ & $e_2$ ($\tau =0.01/2^2$)  & $e_2$ ($\tau =0.01/2^3$) \\
\hline
Central Difference& 6.11\%  & 3.92\% \\
\hline
Runge-Kutta & 3.55\%& 3.55\% \\
\hline
% (f3) Second order central difference& 3 & $\frac{1}{10}$ & 0.50\% \\
% \hline
% (f3) 4-th order Runge Kutta & 3 & $\frac{1}{10}$ & 0.50\%\\
% \hline
\end{tabular}
\caption{$L^2$ error of different time step size $\tau$.}
 \label{tablec0}
\end{table}
\subsection{Case 2}
In our second numerical example, we take the source term $f_2$ and $\kappa_1$ to demonstrate the convergence of our proposed method with respect to the time step size $\tau$, coarse grid size $H$ and the number of oversampling layers $m$. 
The source term $f_2$ is chosen as 
\begin{equation}\label{f2}
\begin{aligned}
f_2(t,x,y)=10\left(-\frac{10tz_0-1}{z_0}\right)\mathrm{exp}\left(-\pi^2z_0^2\left(\frac{10tz_0-1}{z_0}\right)^2\right)\mathrm{exp}\left(-\frac{(x-0.5)^2+(y-0.5)^2}{160h^4}\right),
\end{aligned}
\end{equation}
for all $(t,x,y)\in \Omega$, where the fine grid parameter and the central frequency are chosen as $h=1/240$ and $z_0=2$ (Figure \ref{fig:C3}). Using the fully discrete scheme, we solve for the numerical solution at the final time $T=0.4$ with the time step size $\tau=2.5\times10^{-3}$ (see Figure \ref{fig:k}). Let $\kappa_{\text{cutoff}}=1$ in this case.  We plot the reference solution computed on the fine grid with the fully implicit scheme (upper right), the solution computed by CEM-GMsFEM with the partially-explicit scheme via mass lumping using the central difference scheme (lower left) and the Runge-Kutta scheme (lower right). In Table \ref{tableH}, the coarse grid size $H$ varies from $H=1/6$ to $H=1/24$, and the number of oversampling layers $m$ varies accordingly from $4$ to $7$, following the relationship $m\approx4\log(H)/log(1/6)$. It can be observed that the methods both result in good accuracy and the desired convergence error in the $L^2$ norm, energy norm and $b$-norm. 

Table \ref{table2} and Figure \ref{fig:f0112} depict that the second order central difference scheme converges slower than third order Runge-Kutta scheme, with average convergence rates approaching 2 and 3, respectively. The convergence rates with respect to the time step size $\tau$ and coarse grid size $H$ match our theoretical results.
It is also worth noting that after using mass lumping, the results closely resemble those of the original solution (see Figure \ref{fig:m0l} and Table \ref{tableMK30}). Specifically, the errors remain consistent, confirming that mass lumping does not degrade the accuracy of the numerical solution while significantly improving computational efficiency.
% \begin{table}[H]
% \centering
%  \begin{tabular}{c|c|c|c|c|c|c|c}
% ~ & m & Num & $\kappa$ & H & $e_2$ & $e_a$ & $e_b$\\
% % \hline
% % Second order central difference& 5& 12& $10^{7}$ & $\frac{1}{5}$ & 18.15\% & 23.18\%&16.24\%\\
% \hline
% Runge-Kutta & 5& 12& $10^{7}$ & $\frac{1}{5}$ & 18.13\% & 23.16\%&16.21\%\\
% \hline
% % Second order central difference& 5& 2& $10^{7}$ & $\frac{1}{10}$ & 33.50\% & 46.06\%&26.55\%\\
% % \hline
% Runge-Kutta & 5& 2& $10^{7}$ & $\frac{1}{10}$ & 33.50\% & 46.04\%&26.49\%\\
% \hline
% Central Difference& 5& 12& $10^{7}$ & $\frac{1}{10}$ & 12.84\% & 22.70\%&12.03\%\\
% \hline
% Runge-Kutta & 5& 12& $10^{7}$ & $\frac{1}{10}$ & 11.43\% & 25.97\%&10.76\%\\
%  \hline
%  Second order central difference & 5& 12& $10^{4}$& $\frac{1}{10}$ & 9.30\% & 10.50\% & 8.79\%\\
%   \hline
% Runge-Kutta & 5& 12& $10^{4}$& $\frac{1}{10}$ & 9.29\% & 10.44\% & 8.77\%\\
% \hline
%  % Second order central difference & 1& 12& $10^{7}$& $\frac{1}{10}$ & 56.01\% & 40.86\% & 56.96\%\\
%  % \hline
% Runge-Kutta & 1& 12& $10^{7}$& $\frac{1}{10}$ & 56.01\% & 40.83\% & 56.95\%\\
% \hline
%  (implicit) Central Difference& 5& 12& $10^{7}$& $\frac{1}{10}$ & 9.28\% & 13.81\% & 8.74\%\\
%  \hline
% (implicit) Runge Kutta & 5& 12& $10^{7}$& $\frac{1}{10}$ & 9.27\% & 13.77\% & 8.74\%\\
% \hline
% \end{tabular}
% \caption{The comparison of convergence for wave propagation in model, m: oversampling layers, Num: the number of basis functions, H: coarse grid size.}
%  \label{tablekpp1}
% \end{table}
\begin{figure}[H]
    \centering
    \begin{subfigure}[b]{0.37\textwidth}
        \includegraphics[width=\linewidth]{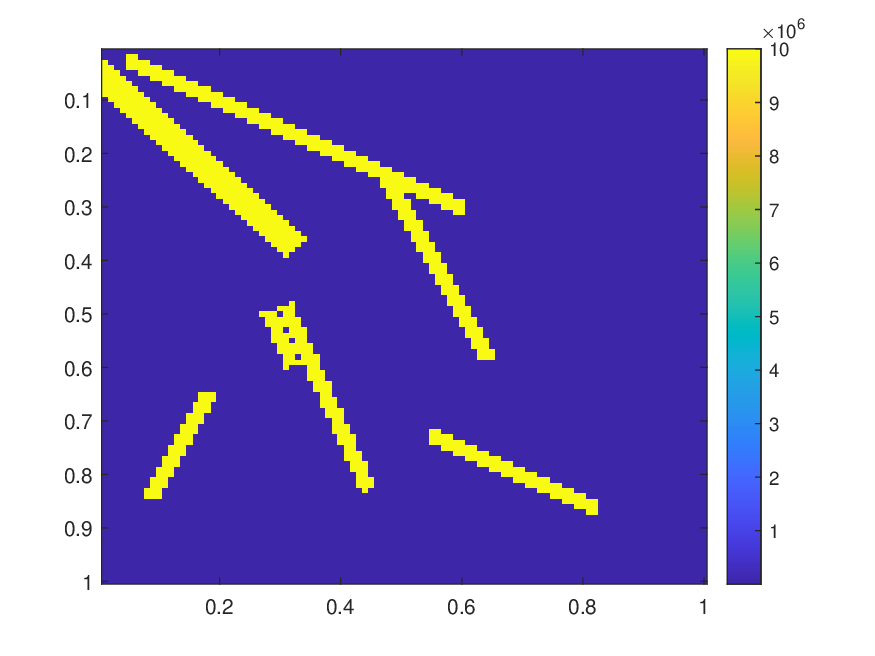}
        \caption{}
        \label{fig:image01}
    \end{subfigure}
    \begin{subfigure}[b]{0.37\textwidth}
        \includegraphics[width=\linewidth]{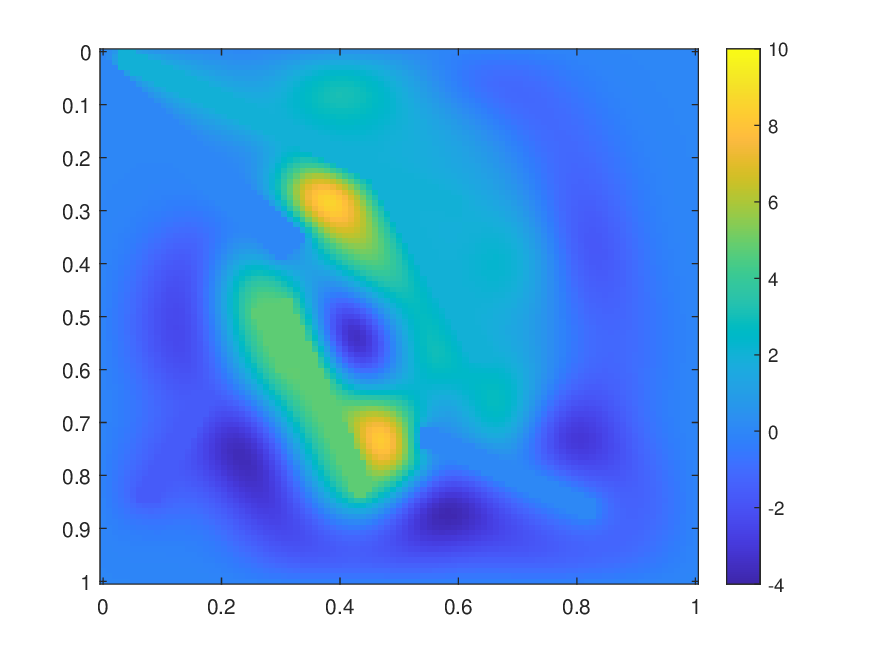}
        \caption{}
        \label{fig:image02}
    \end{subfigure}

    \begin{subfigure}[b]{0.37\textwidth}
        \includegraphics[width=\linewidth]{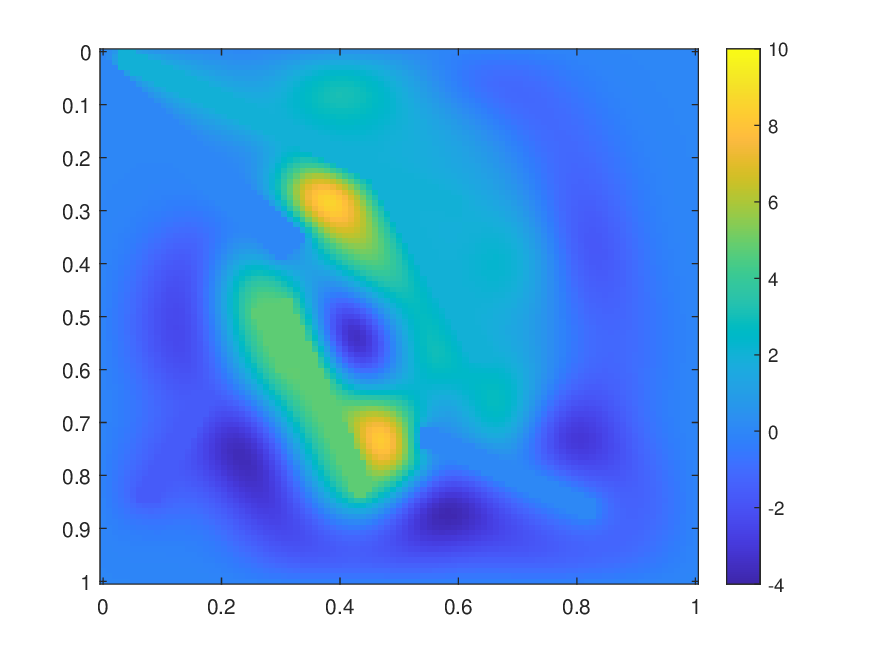}
        \caption{}
        \label{fig:image03}
    \end{subfigure}
    \begin{subfigure}[b]{0.37\textwidth}
        \includegraphics[width=\linewidth]{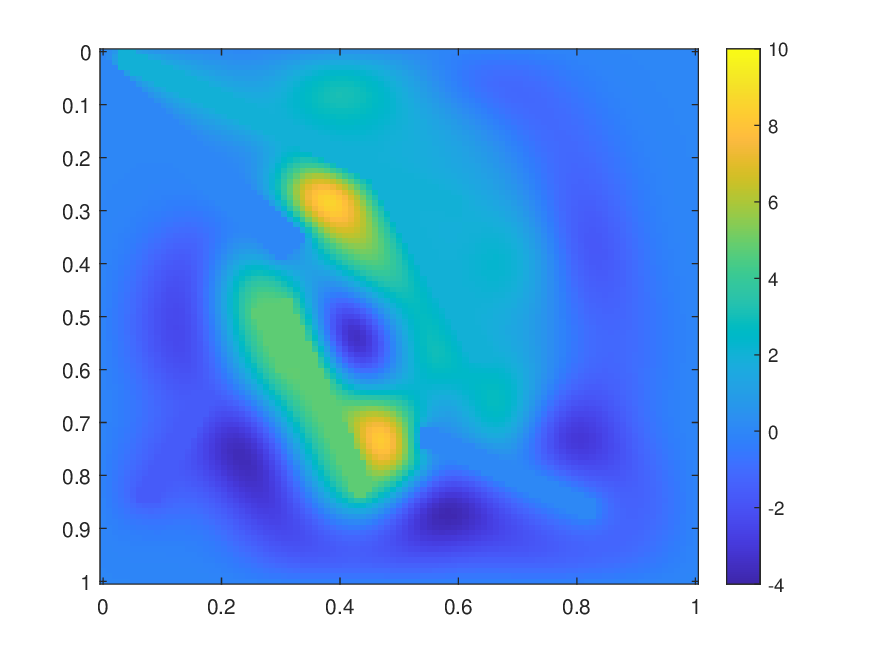}
        \caption{}
        \label{fig:image04}
    \end{subfigure}
    \caption{Upper left: the media $\kappa$ for Case 2. Upper right: reference solution for Case 2. Lower left: second order central difference solution for Case 2. Lower right: third order Runge-Kutta solution for Case 2.}
    \label{fig:k}
\end{figure}
\begin{table}[H]
\centering
\begin{tabular}{c c| c|c c c |c |c c c}
m &H&~& $e_2$&$e_a$&$e_b$&~& $e_2$&$e_a$&$e_b$ \\
\hline
4&$\frac{1}{6}$ &CD&71.14\% & 87.62\%&64.14\% &RK&64.70\%&87.62\%&64.14\%\\
\hline
6&$\frac{1}{12}$ &CD&22.96\% &43.98\%&21.62\%&RK&22.83\%&43.28\%&21.61\%\\
\hline
7& $\frac{1}{24}$ &CD& 3.58\%&8.56\% &3.35\%&RK&3.57\%&8.82\%&3.34\%\\
\hline
% (f3) Second order central difference& 3 & $\frac{1}{10}$ & 0.50\% \\
% \hline
% (f3) 4-th order Runge Kutta & 3 & $\frac{1}{10}$ & 0.50\%\\
% \hline
\end{tabular}
\caption{The convergence of different coarse grid sizes $H$ and the number of oversampling layers $m$ by the central difference scheme (CD) and Runge-Kutta scheme (RK).}
 \label{tableH}
\end{table} 
% \begin{figure}[H]
%     \centering
%     \begin{subfigure}{0.45\textwidth}
%         \includegraphics[width=\linewidth]{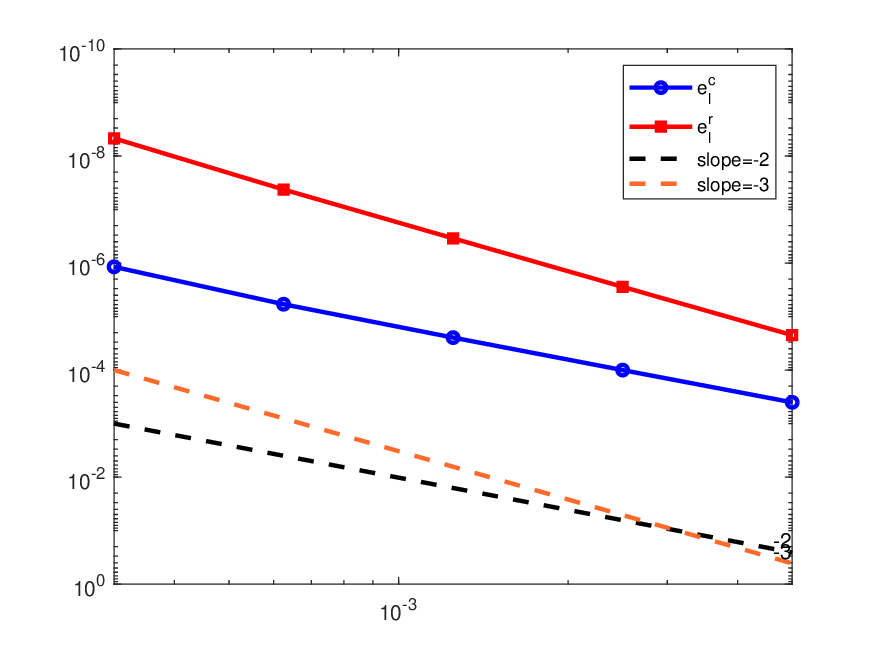}
%         \caption{}
%         \label{.}
%     \end{subfigure}
%     \begin{subfigure}{0.45\textwidth}
%         \includegraphics[width=\linewidth]{}
%         \caption{}
%         \label{..}
%     \end{subfigure}
%     \caption{Left: the comparison of convergence rate of two time discretization scheme displayed in log-$10$ scale. Right: the $L^2$ error (blue) and energy error (red) of Runge-Kutta scheme.}
%     \label{fig:f0112}
% \end{figure}
\begin{table}[H]
\centering
\begin{tabular}{c|c}
~ & Average Convergence Rate \\
\hline
Central Difference&  2.1087 \\
\hline
Runge-Kutta  & 3.0560 \\
\hline
% (f3) Second order central difference& 3 & $\frac{1}{10}$ & 0.50\% \\
% \hline
% (f3) 4-th order Runge Kutta & 3 & $\frac{1}{10}$ & 0.50\%\\
% \hline
\end{tabular}
\caption{The comparison of the convergence rate.}
 \label{table2}
\end{table} 
\begin{table}[H]
\centering
\begin{tabular}{c|c|c}
~ & Without mass lumping & With mass lumping  \\
\hline
Central Difference& 8.52\% & 11.20\%  \\
\hline
Runge-Kutta & 8.52\% & 11.02\%  \\
\hline
% (f3) Second order central difference& 3 & $\frac{1}{10}$ & 0.50\% \\
% \hline
% (f3) 4-th order Runge Kutta & 3 & $\frac{1}{10}$ & 0.50\%\\
% \hline
\end{tabular}
\caption{The comparison of mass lumping scheme.}
 \label{tableMK30}
\end{table}
\begin{figure}[H]
     \centering
     \includegraphics[width=0.5\linewidth]{figure/conver_rate1.eps}
     \caption{Convergence rate of the central difference and Runge-Kutta scheme.}
     \label{fig:f0112}
 \end{figure}
 \begin{figure}[htbp]
    \centering
    \begin{subfigure}{0.45\textwidth}
        \includegraphics[width=\linewidth]{figure/RK4.eps}
        \caption{}
        \label{fig:image210}
    \end{subfigure}
    \begin{subfigure}{0.45\textwidth}
        \includegraphics[width=\linewidth]{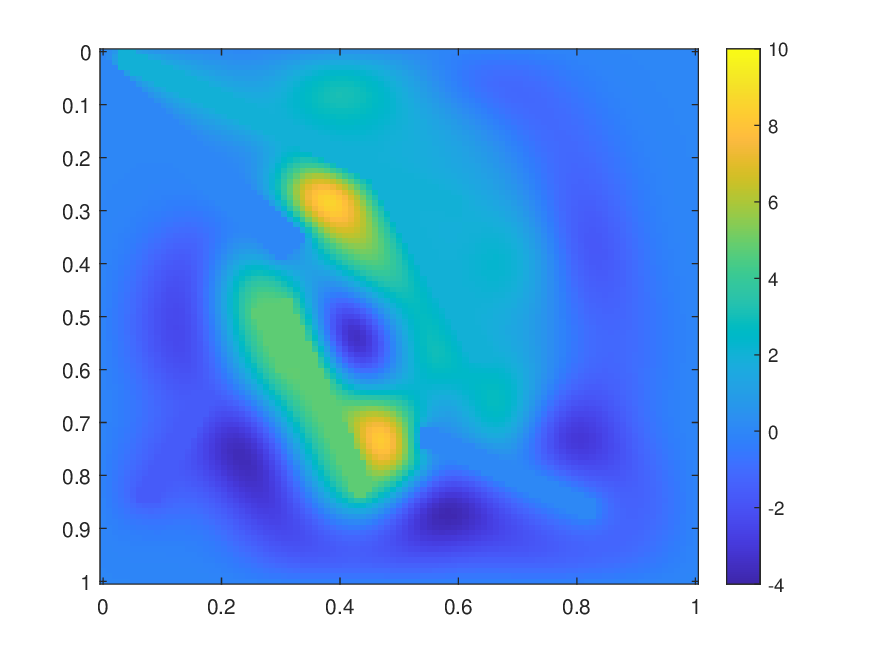}
        \caption{}
        \label{fig:image220}
    \end{subfigure}
    \caption{Left: solution via Runge-Kutta scheme without mass lumping. Right: solution via Runge-Kutta scheme with mass lumping.}
    \label{fig:m0l}
\end{figure}

% \begin{figure}[H]
%     \centering
%     \begin{subfigure}[b]{0.37\textwidth}
%         \includegraphics[width=\linewidth]{figure/kappa.eps}
%         \caption{}
%         \label{fig:imagek1}
%     \end{subfigure}
%     \begin{subfigure}[b]{0.37\textwidth}
%         \includegraphics[width=\linewidth]{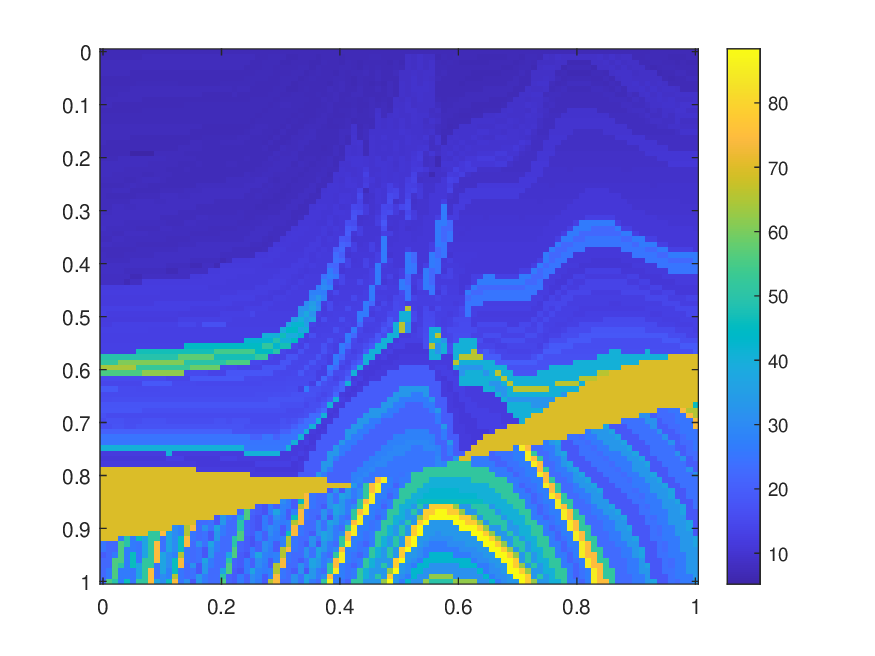}
%         \caption{}
%         \label{fig:imagek2}
%     \end{subfigure}
%     \begin{subfigure}[b]{0.37\textwidth}
%         \includegraphics[width=\linewidth]{}
%         \caption{}
%         \label{fig:imageMk}
%     \end{subfigure}
%     \begin{subfigure}[b]{0.37\textwidth}
%         \includegraphics[width=\linewidth]{figure/f.eps}
%         \caption{}
%         \label{fig:f01}
%     \end{subfigure}
%     \caption{Upper left: the media $\kappa$ for Case 1 (denote $\kappa_1(x)$). Upper right: Marmousi model for Case 2 (denote $\kappa_2(x)$). Lower left: the media $\kappa$ for Case 3 (denote $\kappa_3(x)$). Lower right: source term $f$.}
%     \label{fig:kf}
% \end{figure}
\subsection{Case 3}
In this numerical example, we consider a heterogeneous medium, specifically a modified Marmousi model (shown in subfigure (\ref{fig:image133}) in Figure \ref{fig:k2}). The source term is still $f_2$  (Figure \ref{fig:C3}). The fine grid parameter is chosen as $h=1/240$. We tested the convergence of different coarse grid sizes $H$ and frequency values $z_0$ of the source term shown in Table \ref{tableff}.
% In the second numerical example, we plot the reference solution computed on the fine grid with fully implicit scheme, and the solution computed by CEM-GMsFEM with partially-explicit scheme
Using the fully discrete scheme, we solve for the numerical solution at the final time $T=0.1$ with the time step size $\tau=6.25\times10^{-4}$.  In this case, we set $\kappa_{\text{cutoff}}=35$. In Table \ref{tablee2}, the coarse grid size $H$ varies from $H=1/6$ to $H=1/24$, and the number of oversampling layers $m$ varies accordingly from $4$ to $7$, following the relationship $m\approx4\log(H)/log(1/6)$. It can be observed that the methods both result in good accuracy and the desired convergence error in the $L^2$ norm, energy norm and $b$-norm.
Figure \ref{fig:k2} depicts the reference solution computed on the fine grid with the fully implicit scheme (upper right), the solution computed by CEM-GMsFEM with the partially explicit scheme via mass lumping using the second order central difference scheme (lower left) and the Runge-Kutta scheme (lower right).
It can also be noticed that the fully implicit and partially explicit schemes provide similar results (see Table \ref{tablee}). Table \ref{tableMK3} and Figure \ref{fig:ml} show that mass lumping also provides a reasonable accuracy. From Figure \ref{fig:f0111}, we see clearly that we obtain the expected second order convergence for the central difference method and third order for the Runge-Kutta scheme (see Table \ref{tableMK2}). Furthermore, Table \ref{tablecee0} shows that the central difference scheme fails to converge unless the time step size is smaller than $\tau=0.01/2^5$. In comparison, Runge-Kutta converges faster, thereby saving computational resources.
% \begin{table}[H]
% \centering
%  \begin{tabular}{c|c|c|c|c|c|c}
% ~ & m & Num  & H & $e_2$ & $e_a$ & $e_b$\\
% % \hline
% % Second order central difference& 5& 12 & $\frac{1}{5}$ & 10.90\% & 21.75\%&9.97\%\\
% \hline
% Runge-Kutta & 5& 12 & $\frac{1}{5}$ & 10.72\% & 21.37\%&9.80\%\\
% % \hline
% % Second order central difference& 5& 2 & $\frac{1}{10}$ & 23.95\% & 47.26\%&20.03\%\\
% \hline
% Runge-Kutta & 5& 2 & $\frac{1}{10}$ & 23.77\% & 46.90\%&19.86\%\\
% \hline
% Central Difference& 5& 12 & $\frac{1}{10}$ & 12.84\% & 22.70\%&12.03\%\\
% \hline
% Runge-Kutta & 5& 12& $\frac{1}{10}$ & 11.43\% & 25.97\%&10.76\%\\
%  \hline
% % Second order central difference & 5& 12& $10^{4}$& $\frac{1}{10}$ & 8.37\% & 16.66\% & 7.89\%\\
% %  \hline
% %  third order Runge Kutta & 5& 12& $10^{4}$& $\frac{1}{10}$ & 8.17\% & 16.22\% & 7.71\%\\
% % \hline
%  % Second order central difference & 1& 12& $\frac{1}{10}$ & 36.33\% & 38.16\% & 33.69\%\\
%  % \hline
% Runge-Kutta & 1& 12& $\frac{1}{10}$ & 34.35\% & 38.01\% & 33.41\%\\
% \hline
%  (implicit) Central Difference& 5& 12& $\frac{1}{10}$ & 8.16\% & 16.43\% & 7.71\%\\
%  \hline
% (implicit) Runge-Kutta & 5& 12& $\frac{1}{10}$ & 8.04\% & 15.92\% & 7.60\%\\
% \hline
% \end{tabular}
% \caption{The comparison of convergence for wave propagation in model, m: oversampling layers, Num: the number of basis functions, H: coarse grid size.}
%  \label{tableMK1}
% \end{table} 
\begin{table}[H]
\centering
\begin{tabular}{c|c| c c c|c| c c c }
\diagbox[width=3em]{$z_0$}{$H$}&~& $1/6$&$1/12$&$1/24$&~& $1/6$&$1/12$&$1/24$\\
\hline
$1/2$ &CD&55.14\% & 9.73\%&2.08\%&RK&54.04\% & 9.54\%&2.08\% \\
\hline
$5$ &CD&69.98\% & 14.99\%&3.51\%&RK&67.52\% & 14.31\%&3.17\% \\
\hline
$10$&CD & 82.63\% &26.12\%&6.13\%&RK&79.80\% & 26.06\%&5.97\%\\
\hline
% (f3) Second order central difference& 3 & $\frac{1}{10}$ & 0.50\% \\
% \hline
% (f3) 4-th order Runge Kutta & 3 & $\frac{1}{10}$ & 0.50\%\\
% \hline
\end{tabular}
\caption{$e_b$
of different coarse grid sizes $H$ and frequency values $z_0$ of the source term computed by the central difference scheme (CD) and Runge-Kutta scheme (RK).}
 \label{tableff}
 \end{table}
% \begin{table}[H]
% \centering
% \begin{tabular}{c| c|c c c |c |c c c}
% $z_0$&~& $e_2$&$e_a$&$e_b$&~& $e_2$&$e_a$&$e_b$ \\
% \hline
% $1$ &CD&2.37\% & 7.30\%&2.06\% &RK&2.32\%&7.00\%&2.01\%\\
% \hline
% $5$ &CD&12.84\% & 18.71\%&10.89\% &RK&11.43\%&18.45\%&10.30\%\\
% \hline
% $10$ &CD& 22.30\% &46.57\%&21.90\%&RK&21.45\%&45.37\%&21.45\%\\
% \hline
% $30$ &CD& 39.81\%&59.50\% &37.48\%&RK&33.52\%&58.39\%&30.57\%\\
% \hline
% % (f3) Second order central difference& 3 & $\frac{1}{10}$ & 0.50\% \\
% % \hline
% % (f3) 4-th order Runge Kutta & 3 & $\frac{1}{10}$ & 0.50\%\\
% % \hline
% \end{tabular}
% \caption{The convergence of different frequency values.}
%  \label{tableff}
% \end{table}
\begin{figure}[H]
    \centering
    \begin{subfigure}[b]{0.38\textwidth}
        \includegraphics[width=\linewidth]{figure/Mk.eps}
        \caption{}
        \label{fig:image133}
    \end{subfigure}
    \begin{subfigure}[b]{0.38\textwidth}
        \includegraphics[width=\linewidth]{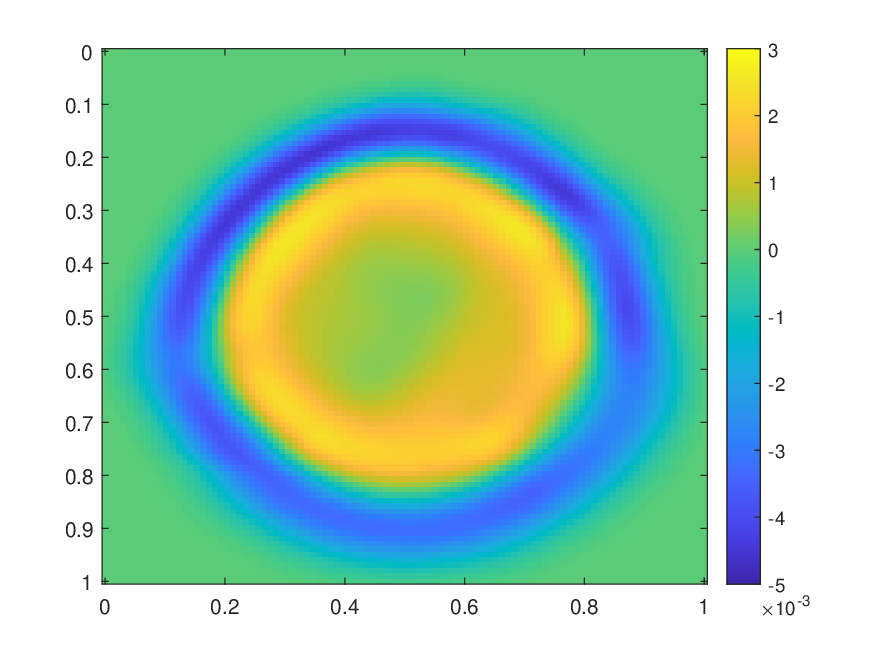}
        \caption{}
        \label{fig:image2333}
    \end{subfigure}
    \begin{subfigure}[b]{0.38\textwidth}
        \includegraphics[width=\linewidth]{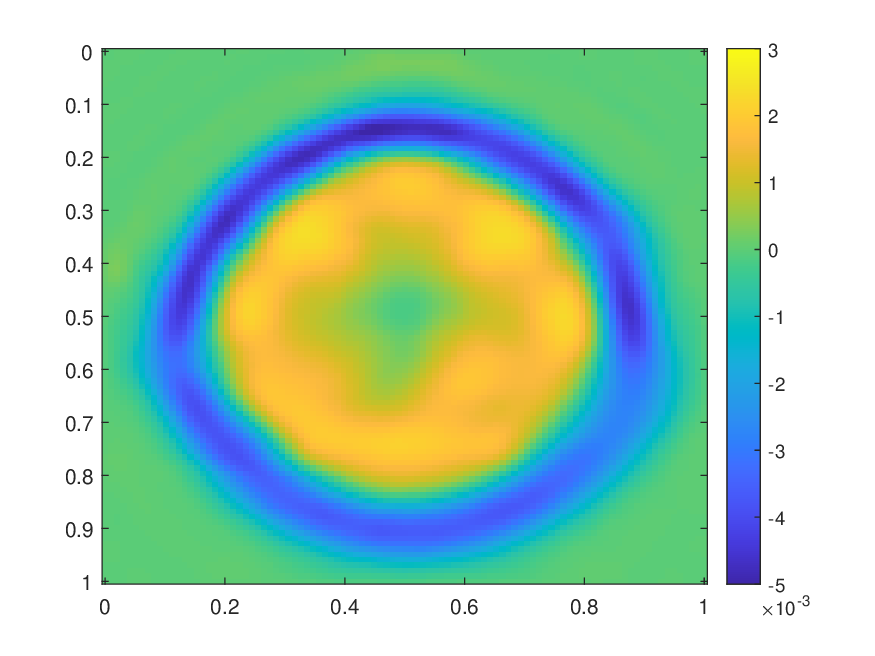}
        \caption{}
        \label{fig:image333}
    \end{subfigure}
     \begin{subfigure}[b]{0.38\textwidth}
        \includegraphics[width=\linewidth]{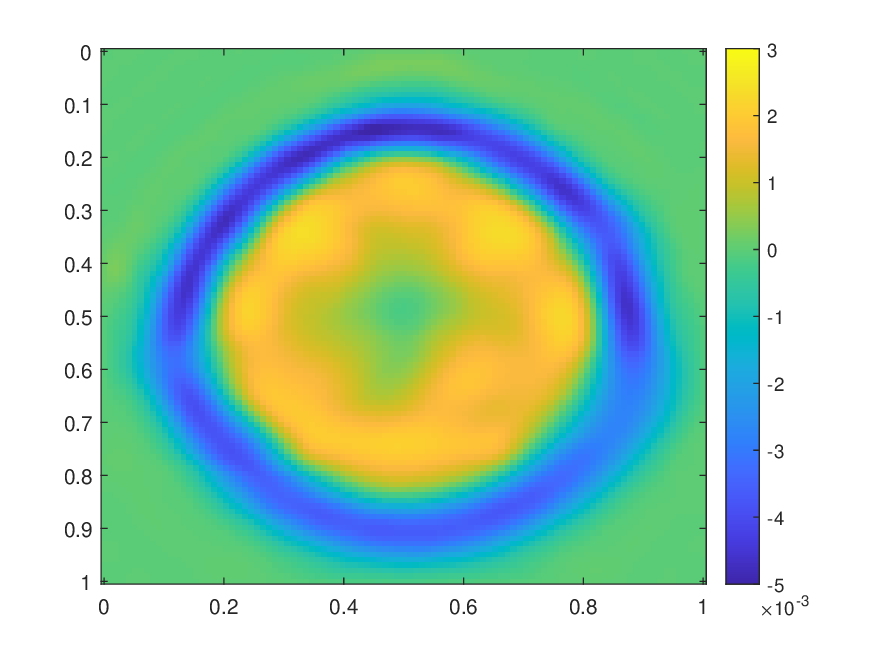}
        \caption{}
        \label{fig:image335}
    \end{subfigure}
    \caption{Upper left: modified Marmousi model. Upper right: reference solution for Case 3. Lower left: second order central difference solution for Case 3. Lower right: third order Runge-Kutta solution for Case 3.}
    \label{fig:k2}
\end{figure}
\begin{table}[H]
\centering
\begin{tabular}{c|c|c}
~ & Implicit & Partially explicit  \\
\hline
Central Difference& 11.44\% & 12.84\%  \\
\hline
Runge-Kutta & 10.85\% & 11.43\%  \\
\hline
\end{tabular}
\caption{The comparison of the time discretization scheme.}
 \label{tablee}
\end{table}
\begin{table}[H]
\centering
\begin{tabular}{c|c|c}
~ & Without mass lumping & With mass lumping  \\
\hline
Central Difference& 10.77\% & 12.84\%  \\
\hline
Runge-Kutta & 10.21\% & 11.43\%  \\
\hline
% (f3) Second order central difference& 3 & $\frac{1}{10}$ & 0.50\% \\
% \hline
% (f3) 4-th order Runge Kutta & 3 & $\frac{1}{10}$ & 0.50\%\\
% \hline
\end{tabular}
\caption{The comparison of the mass lumping scheme.}
 \label{tableMK3}
\end{table}
\begin{figure}[H]
    \centering
    \begin{subfigure}{0.45\textwidth}
        \includegraphics[width=\linewidth]{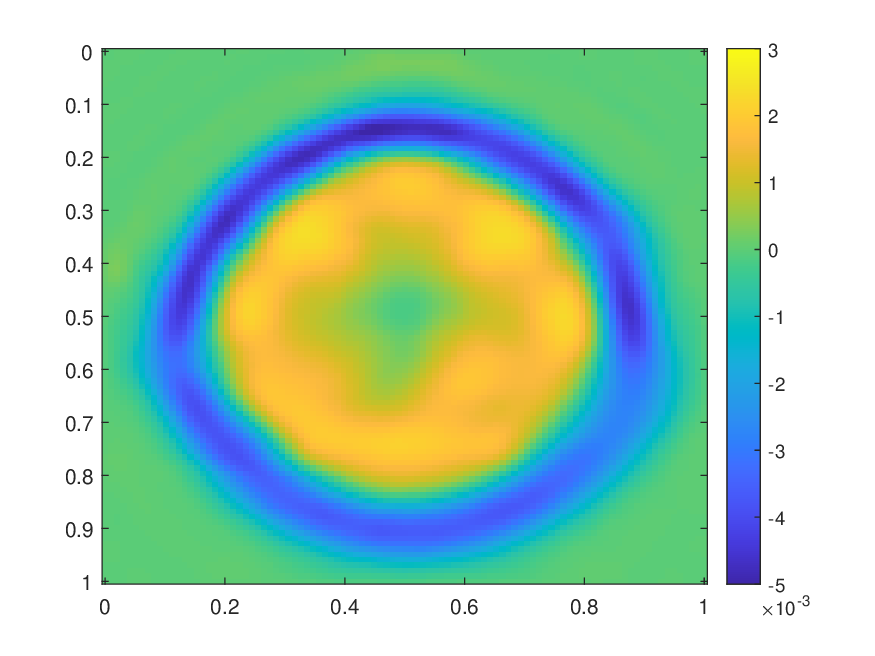}
        \caption{}
        \label{fig:image21}
    \end{subfigure}
    \begin{subfigure}{0.45\textwidth}
        \includegraphics[width=\linewidth]{figure/MkCD2.eps}
        \caption{}
        \label{fig:image22}
    \end{subfigure}
    \caption{Left: solution via Runge-Kutta scheme without mass lumping. Right: solution via Runge-Kutta scheme with mass lumping.}
    \label{fig:ml}
\end{figure}

\begin{figure}[H]
     \centering
     \includegraphics[width=0.5\linewidth]{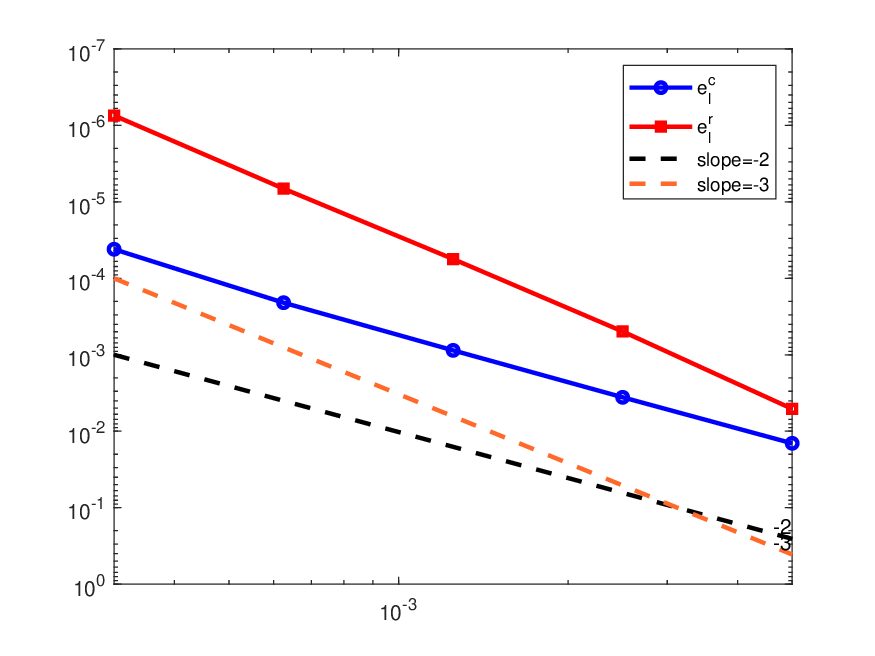}
     \caption{Convergence rate of central difference and Runge-Kutta scheme.}
     \label{fig:f0111}
 \end{figure}
 \begin{table}[H]
\centering
\begin{tabular}{c c| c|c c c |c |c c c}
m &H&~& $e_2$&$e_a$&$e_b$&~& $e_2$&$e_a$&$e_b$ \\
\hline
4&$1/6$ &CD&59.91\% & 94.12\%&41.66\% &RK&59.09\%&94.03\%&40.59\%\\
\hline
6&$1/12$ &CD&12.84\% &18.71\%&10.89\%&RK&11.43\%&18.45\%&10.30\%\\
\hline
7& $1/24$ &CD& 1.73\%&4.36\% &1.99\%&RK&1.72\%&4.31\%&1.95\%\\
\hline
% (f3) Second order central difference& 3 & $\frac{1}{10}$ & 0.50\% \\
% \hline
% (f3) 4-th order Runge Kutta & 3 & $\frac{1}{10}$ & 0.50\%\\
% \hline
\end{tabular}
\caption{The convergence of different coarse grid sizes $H$ and the number of oversampling layers $m$ computed by the central difference scheme (CD) and Runge-Kutta scheme (RK).}
 \label{tablee2}
\end{table}
\begin{table}[H]
\centering
\begin{tabular}{c|c}
~ &  Average convergence rate \\
\hline
Central Difference& 2.1039 \\
\hline
Runge-Kutta &  3.0825 \\
\hline
% (f3) Second order central difference& 3 & $\frac{1}{10}$ & 0.50\% \\
% \hline
% (f3) 4-th order Runge Kutta & 3 & $\frac{1}{10}$ & 0.50\%\\
% \hline
\end{tabular}
\caption{The comparison of the convergence rate.}
 \label{tableMK2}
\end{table}
\begin{table}[H]
\centering
\begin{tabular}{c|c|c|c}
~ & $e_2$ ($\tau =0.01/2^3$)  & $e_2$ ($\tau =0.01/2^4$) & $e_2$ ($\tau =0.01/2^5$) \\
\hline
Central Difference& 17.91\%  & 12.84\% & 11.82\% \\
\hline
Runge-Kutta & 11.55\%& 11.43\% & 11.43\%\\
\hline
% (f3) Second order central difference& 3 & $\frac{1}{10}$ & 0.50\% \\
% \hline
% (f3) 4-th order Runge Kutta & 3 & $\frac{1}{10}$ & 0.50\%\\
% \hline
\end{tabular}
\caption{$L^2$ error of different time step size $\tau$.}
 \label{tablecee0}
\end{table}

\section{Conclusion}\label{007}
In this paper, we designed a contrast-independent, partially explicit time discretization method for wave equations using mass lumping. The proposed approach employs temporal splitting based on a multiscale decomposition of the approximation space. Initially, we introduced two spatial subspaces corresponding to fast and slow time scales. We present temporal splitting algorithms using the aforementioned spatial multiscale methods for our system. Then, we derive the stability and convergence conditions of the proposed method and demonstrate that these conditions are contrast-independent.
To address the issue of system coupling, we introduce a mass lumping scheme via the diagonalization technique, which avoids matrix inversion procedures and significantly improves computational efficiency, especially in the explicit part. Furthermore, after decoupling the resulting system, higher-order time discretization techniques, such as the Runge-Kutta method, can be applied to achieve better accuracy within the same number of time steps.
We present several numerical examples that show the proposed method yields outcomes very similar to those of fully implicit schemes. Our numerical results indicate that only a minimal number of auxiliary basis functions is required to achieve good accuracy, independent of contrast. Additionally, the convergence rate is influenced by the time step size, the coarse grid size, and the time-splitting scheme used, all of which strongly confirm our theoretical findings.

\section*{Acknowledgments}
W.T. Leung is partially supported by the Hong Kong RGC Early Career Scheme 21307223.
\appendix
%\section*{Appendix A: Proof of Lemma \ref{xxx}.}
\section{Proof of Lemma \ref{lemma1}} \label{appl1}
% This is the proof for Lemma \ref{lemma1}.
\begin{proof}
    We have 
\begin{equation}\label{1.111}
    b\left(u_{H}^{n+1}-2u_{H}^{n}+u_{H}^{n-1},w\right)+\frac{\tau^2}{2}a\left(u_{H,1}^{n+1}+u_{H,1}^{n-1}+2u_{H,2}^{n},w\right)=0~ \forall w \in V_{H,1},
\end{equation}
\begin{equation}\label{1.112}
    b\left(u_{H}^{n+1}-2u_{H}^{n}+u_{H}^{n-1},w\right)+\tau^2 a\left(u_{H,1}^{n}+u_{H,2}^{n},w\right)=0~\forall w \in V_{H,2}.
\end{equation}
We consider $w=u_{H,1}^{n+1}-u_{H,1}^{n-1}$ in the first equation and $w=u_{H,2}^{n+1}-u_{H,2}^{n-1}$ in the second equation and obtain the following equations
\begin{equation}\label{1.113}
\begin{aligned}
    &b\left(u_{H}^{n+1}-2u_{H}^{n}+u_{H}^{n-1},u_{H,1}^{n+1}-u_{H,1}^{n-1}\right)\\
    +&\frac{\tau^2}{2}a\left(u_{H,1}^{n+1}+u_{H,1}^{n-1}+2u_{H,2}^{n},u_{H,1}^{n+1}-u_{H,1}^{n-1}\right)=0~ \forall w \in V_{H,1},\\
    &b\left(u_{H}^{n+1}-2u_{H}^{n}+u_{H}^{n-1},u_{H,2}^{n+1}-u_{H,2}^{n-1}\right)\\
    +&\tau^2 a\left(u_{H,1}^{n}+u_{H,2}^{n},u_{H,2}^{n+1}-u_{H,2}^{n-1}\right)=0~\forall w \in V_{H,2}.
\end{aligned}
\end{equation}
The sum of the first terms on the left hand sides can be estimated in the following way 
\begin{equation}\label{1.115}
\begin{aligned}
    &\sum_{i=1,2}b(u_H^{n+1}-2u_H^n+u_H^{n-1},u_{H,i}^{n+1}-u_{H,i}^{n-1})\\
    =&b(u_H^{n+1}-2u_H^n+u_H^{n-1},u_{H}^{n+1}-u_{H}^{n-1})\\
    =&\|u_H^{n+1}-u_H^n\|_b^2-\|u_H^{n}-u_H^{n-1}\|_b^2.
\end{aligned}
\end{equation}
Next, we estimate the terms involving the bilinear form $a$, it has
\begin{equation}\label{1.116}
\begin{aligned}
     &\frac{1}{2} a\big(u_{H,1}^{n+1}+u_{H,1}^{n-1}+2u_{H,2}^{n},u_{H,1}^{n+1}-u_{H,1}^{n-1}\big)\\
     =& \frac{1}{2}a\big(u_{H,1}^{n+1}+u_{H,1}^{n-1},u_{H,1}^{n+1}-u_{H,1}^{n-1}\big)+a\big(u_{H,2}^{n},u_{H,1}^{n+1}-u_{H,1}^{n-1}\big)
\end{aligned}
\end{equation}
and
\begin{equation}\label{1.117}
\begin{aligned}
      &a\big (u_{H,1}^{n}+u_{H,2}^{n},u_{H,2}^{n+1}-u_{H,2}^{n-1}\big)\\
      =&a\big(u_{H,1}^{n},u_{H,2}^{n+1}-u_{H,2}^{n-1}\big)+a\big(u_{H,2}^{n},u_{H,2}^{n+1}-u_{H,2}^{n-1}\big).
\end{aligned}
\end{equation}
Thus, we have 
\begin{equation}\label{1.118}
\begin{aligned}
      &\frac{\tau^2}{2}a\big(u_{H,1}^{n+1}+u_{H,1}^{n-1}+2u_{H,2}^{n},u_{H,1}^{n+1}-u_{H,1}^{n-1}\big)+\tau^2 a\big(u_{H,1}^{n}+u_{H,2}^{n},u_{H,2}^{n+1}-u_{H,2}^{n-1}\big)\\
      =&\frac{\tau^2}{2}Q_1+\tau^2Q_2+\tau^2Q_3,
\end{aligned}
\end{equation}
where
\begin{equation}\label{1.119}
\begin{aligned}
      Q_1&=a\big(u_{H,1}^{n+1}+u_{H,1}^{n-1},u_{H,1}^{n+1}-u_{H,1}^{n-1}\big),\\
      Q_2&=a\big(u_{H,2}^{n},u_{H,1}^{n+1}-u_{H,1}^{n-1})+a\big(u_{H,1}^{n},u_{H,2}^{n+1}-u_{H,2}^{n-1}\big),\\
      Q_3&=a\big(u_{H,2}^{n},u_{H,2}^{n+1}-u_{H,2}^{n-1}\big).
\end{aligned}
\end{equation}
To estimate $Q_1$, we have
\begin{equation}\label{1.120}
\begin{aligned}
      Q_1=&a\big(u_{H,1}^{n+1}+u_{H,1}^{n-1},u_{H,1}^{n+1}-u_{H,1}^{n-1}\big)=\big\|u_{H,1}^{n+1}\big\|_a^2-\big\|u_{H,1}^{n-1}\big\|_a^2\\
      =&\Big(\big\|u_{H,1}^{n+1}\big\|_a^2+\big\|u_{H,1}^{n}\big\|_a^2\Big)-\Big(\big\|u_{H,1}^{n}\big\|_a^2+\big\|u_{H,1}^{n-1}\big\|_a^2\Big).
\end{aligned}
\end{equation}
We next estimate $Q_2$ and have
\begin{equation}\label{1.121}
\begin{aligned}
      Q_2=&a\big(u_{H,2}^{n},u_{H,1}^{n+1}-u_{H,1}^{n-1}\big)+a\big(u_{H,1}^{n},u_{H,2}^{n+1}-u_{H,2}^{n-1}\big)\\
      =&a\big(u_{H,2}^{n},u_{H,1}^{n+1}\big)+a\big(u_{H,1}^{n},u_{H,2}^{n+1}\big)-a\big(u_{H,2}^{n},u_{H,1}^{n-1}\big)-a\big(u_{H,1}^{n},u_{H,2}^{n-1}\big).
\end{aligned}
\end{equation}
Since $a(\cdot,\cdot)$ is symmetric, we have
\begin{equation}\label{1.122}
\begin{aligned}      Q_2=\big(a(u_{H,2}^{n+1},u_{H,1}^{n})+a(u_{H,1}^{n+1},u_{H,2}^{n})\big)-\big(a(u_{H,2}^{n},u_{H,1}^{n-1})+a(u_{H,1}^{n},u_{H,2}^{n-1})\big).
\end{aligned}
\end{equation}
To estimate $Q_3$, we have 
\begin{equation}\label{1.123}
\begin{aligned}
      Q_3=a\big(u_{H,2}^{n},u_{H,2}^{n+1}-u_{H,2}^{n-1}\big)=a\big(u_{H,2}^{n+1},u_{H,2}^{n}\big)-a\big(u_{H,2}^{n},u_{H,2}^{n-1}\big).
\end{aligned}
\end{equation}
We also have 
\begin{equation}\label{1.124}
\begin{aligned}
      a\big(u_{H,2}^{n+1},u_{H,2}^{n}\big)=\frac{1}{2}\left(\|u_{H,2}^{n+1}\|_a^2+\|u_{H,2}^{n}\|_a^2-\|u_{H,2}^{n+1}-u_{H,2}^{n}\|_a^2\right)
\end{aligned}
\end{equation}
and
\begin{equation}\label{1.125}
\begin{aligned}
      a(u_{H,2}^{n},u_{H,2}^{n-1})=\frac{1}{2}\left(\|u_{H,2}^{n}\|_a^2+\|u_{H,2}^{n-1}\|_a^2-\|u_{H,2}^{n}-u_{H,2}^{n-1}\|_a^2\right).
\end{aligned}
\end{equation}
Thus, we have 
\begin{equation}\label{1.126}
\begin{aligned}
&\frac{\tau^2}{2} a\left(u_{H,1}^{n+1}+u_{H,1}^{n-1}+2u_{H,2}^{n},u_{H,1}^{n+1}-u_{H,1}^{n-1}\right) \\
&+\tau^2a\left(u_{H,1}^{n}+u_{H,2}^{n},u_{H,2}^{n+1}-u_{H,2}^{n-1}\right)\\
=&\frac{\tau^2}{2}\left(\|u_{H,1}^{n+1}\|_a^2+\|u_{H,1}^{n}\|_a^2\right)-\frac{\tau^2}{2}\left(\|u_{H,1}^{n}\|_a^2+\|u_{H,1}^{n-1}\|_a^2\right)\\
&+\tau^2\Big(a\big(u_{H,1}^{n+1},u_{H,2}^{n}\big)+a\big(u_{H,2}^{n+1},u_{H,1}^{n}\big)\Big)\\
&+\frac{\tau^2}{2}\left(\|u_{H,2}^{n+1}\|_a^2+\|u_{H,2}^{n}\|_a^2-\|u_{H,2}^{n+1}-u_{H,2}^{n}\|_a^2\right)\\
&-\tau^2\left(a(u_{H,1}^{n},u_{H,2}^{n-1})+a(u_{H,2}^{n},u_{H,1}^{n-1})\right)\\
&-\frac{\tau^2}{2}\left(\|u_{H,2}^{n}\|_a^2+\|u_{H,2}^{n-1}\|_a^2-\|u_{H,2}^{n}-u_{H,2}^{n-1}\|_a^2\right).
\end{aligned}
\end{equation}
By definition, the conclusion can be derived.
\end{proof}
\section{Proof of Theorem \ref{2.01}.}\label{appb}
% Proof for Theorem \ref{2.01}.
\begin{proof}
    By the definition of $R^n_1,R^n_2$ and $R^n_3$, we have
\begin{equation}\label{2.3}
\begin{aligned}R_1^n=&\sum_{k=1}^n\left\|\pi\left(\frac{\eta^{n+1}-2\eta^n+\eta^{n-1}}{\tau^2}\right)\right\|_{L^2(\Omega)}\\
     &+\sum_{k=1}^n\left\|\pi\left(\frac{\partial^2u}{\partial t^2}(\cdot,t_n)-\frac{u^{n+1}-2u^n+u^{n-1}}{\tau^2}\right)\right\|_{L^2(\Omega)},\\
     R_2^n=&\tau\sum_{k=1}^{n-1}\left\|(I-\pi)(\frac{f^{k+1}-f^k}{\tau})\right\|_{L^2(\Omega)}+\|(I-\pi)(f^1)\|_{L^2(\Omega)}\\
     &+\|(I-\pi)(f^n)\|_{L^2(\Omega)},\\
     R_3^n=&\sum_{k=1}^n\left\|\left(\frac{P_{H,1}u^{n+1}-2P_{H,1}u^n+P_{H,1}u^{n-1}}{2}\right)\right\|_a.
\end{aligned}
\end{equation}
By using Taylor's expansion and (\ref{L.2}), it can be estimated
\begin{equation}\label{2.32}
\begin{aligned}
     \tau (R_1^n+R_3^n)+\Lambda^{-\frac{1}{2}}HR_2^n\leq C(\Lambda^{-1}H^2+\tau^2).
     \end{aligned}
\end{equation}
In particular, choose $g=f-\frac{\partial^2u}{\partial t^2}$ and $u^n=G(g(\cdot,t_n)).$
By Taylor's theorem, it has
\begin{equation}\label{T1}
\begin{aligned}
     g(\cdot,t_n+t)=g(\cdot,t_n)+t\frac{\partial g}{\partial t}(\cdot,t_n)+\int_{t_n}^{t_n+t}s\frac{\partial^2g}{\partial t^2}(\cdot,s)ds.
\end{aligned}
\end{equation}
Integrating from $t=-\tau$ to $t=\tau$, we have
\begin{equation}\label{T1.1}
\begin{aligned}
     \|g(\cdot,t_n)\|_{L^2(\Omega)}\leq \frac{1}{2\tau}\|g\|_{L^1(t_{n-1},t_{n+1};L^2(\Omega))}+\frac{\tau}{2}\|\frac{\partial g}{\partial t}\|_{L^1(t_{n-1},t_{n+1};L^2(\Omega))}.
\end{aligned}
\end{equation}
From (\ref{L.2}), it can be derived that
\begin{equation}\label{T1.2}
\begin{aligned}
     \|\eta\|_{L^2(\Omega)}\leq C\Lambda^{-1}H^2\left(\|g\|_{C([0,T];L^2(\Omega))}+\tau^2\left\|\frac{\partial^2g}{\partial t^2}\right\|_{C([0,T];L^2(\Omega))}\right).
\end{aligned}
\end{equation}
Then, since
\begin{equation}\label{T2}
\begin{aligned}
     g(\cdot,t_n+t)=&g(\cdot,t_n)+t\frac{\partial g}{\partial t}(\cdot,t_n)+\frac{t^2}{2}\frac{\partial^2g}{\partial t^2}(\cdot,t_n)\\
     &+\frac{t^3}{6}\frac{\partial^3g}{\partial t^3}(\cdot,t_n)+\int_{t_n}^{t_n+t}\frac{s^3}{6}\frac{\partial^4g}{\partial t^4}(\cdot,s)ds.
\end{aligned}
\end{equation}
Taking $t=\pm\tau$, we have 
\begin{equation}\label{T2.1}
\begin{aligned}
     &\|g(\cdot,t_{n+1})-2g(\cdot,t_{n})+g(\cdot,t_{n-1})\|_{L^2(\Omega)}\\
     &\leq \tau^2\|\frac{\partial^2g}{\partial t^2}(\cdot,t_n)\|_{L^2(\Omega)}+\frac{s^3}{3}\|\frac{\partial^4g}{\partial t^4}\|_{L^1(t_{n-1},t_{n+1};L^2(\Omega))}.
\end{aligned}
\end{equation}
Replacing $g$ by $\frac{\partial^2g}{\partial t^2}$ in \ref{T1.1}, and using (\ref{L.2}), it has
\begin{equation}\label{T2.2}
\begin{aligned}
     &\left\|\frac{\eta^{n+1}-2\eta^n+\eta^{n-1}}{\tau^2}\right\|_{L^2(\Omega)}\\
     &\leq C\Lambda^{-1}H^2\left(\|\frac{\partial^2g}{\partial t^2}(\cdot,t_n)\|_{C([0,T];L^2(\Omega))}+\frac{s^3}{3}\|\frac{\partial^4g}{\partial t^4}\|_{C([0,T];L^2(\Omega))}\right).
\end{aligned}
\end{equation}
Thus, 
\begin{equation}\label{T2.3}
\begin{aligned}
     &\sum_{k=1}^n\left\|\pi\left(\frac{\eta^{n+1}-2\eta^n+\eta^{n-1}}{\tau^2}\right)\right\|_{L^2(\Omega)}\\
     &\leq C\tau^{-1}\Lambda^{-1}H^2\left\|\frac{\partial^2 f}{\partial t^2}-\frac{\partial^4 u}{\partial t^4}\right\|_{C([0,T];L^2(\Omega))}+C\tau\left\|\frac{\partial^4 u}{\partial t^4}\right\|_{C([0,T];L^2(\Omega))}.
\end{aligned}
\end{equation}
Similarly, we estimate the second term in $R_1^n$ using Taylor's expansion
\begin{equation}\label{T2.4}
\begin{aligned}
     \sum_{k=1}^n\left\|\pi\left(\frac{\partial^2u}{\partial t^2}(\cdot,t_n)-\frac{u^{n+1}-2u^n+u^{n-1}}{\tau^2}\right)\right\|_{L^2(\Omega)}\leq C\tau\left\|\frac{\partial^4 u}{\partial t^4}\right\|_{C([0,T];L^2(\Omega))}.
\end{aligned}
\end{equation}
Using $\gamma_a = \sup_{u_2\in V_{H,2},u_1\in V_{H,1}}\cfrac{a(u_1,u_2)}{\|u_1\|_a\|u_2\|_a}<1$, we have
\begin{equation}
\begin{aligned}
    s_1^n&=-P_{H,1}\left(\frac{u^{n+1}-2u^n+u^{n-1}}{2}\right),\\
    \|s_1^n\|_a&\leq \frac{1}{2(1-\gamma_a)}\|u^{n+1}-2u^n+u^{n-1}\|_a\leq\frac{C\tau^2}{2(1-\gamma_a)}\sup_{t\in[0,T]}\|\frac{\partial^2u}{\partial t^2}(\cdot,t)\|_a
\end{aligned}
\end{equation}
\end{proof}

\section{Proof of Theorem \ref{2.02}}\label{appd}
% Proof for Theorem \ref{2.02}.
\begin{proof}
Recalling the definition of $u$ and $u_H$ and noting that $a(\eta^n,w)=0,$ for any $w\in V_H^{(m)}$, we have
\begin{equation}\label{2.210}
\begin{aligned}
     &b\left(\frac{\delta^{n+1}-2\delta^n+\delta^{n-1}}{\tau^2},w\right)+a\left(\frac{\delta_1^{n+1}+\delta_1^{n-1}}{2}+\delta_2^n,w\right)\\
     &=(r_1^n,w)+a(s_1^n,w)~\forall w\in V_{H,1},\\
     &b\left(\frac{\delta^{n+1}-2\delta^n+\delta^{n-1}}{\tau^2},w\right)+a\left(\delta_1^{n}+\delta_2^{n},w\right)=(r_2^n,w)~\forall w\in V_{H,2},
\end{aligned}
\end{equation}
where 
\begin{equation}\label{2.220}
\begin{aligned}
     r^n=r_1^n+r_2^n=&\pi\left(\frac{\partial^2u}{\partial t^2}(\cdot,t_n)-\frac{u^{n+1}-2u^n+u^{n-1}}{\tau^2}\right) + (I-\pi)\left(\cfrac{\partial^2u^n}{\partial t^2}\right)\\
     &+\pi\left(\frac{\eta^{n+1}-2\eta^n+\eta^{n-1}}{\tau^2}\right)+(I-\pi)(f^n),\\
     s_1^n=&-\left(\frac{P_{H,1}u^{n+1}-2P_{H,1}u^n+P_{H,1}u^{n-1}}{2}\right).
\end{aligned}
\end{equation}
    Denote $\Delta^n=\tau\sum_{k=1}^n\delta^k.$ Using the telescoping sum over $(\ref{2.210})$, we have 
\begin{equation}\label{2.5}
\begin{aligned}
     b\left(\frac{\delta^{n+1}-\delta^{n-1}}{\tau},w\right)-b\left(\frac{\delta^{1}-\delta^{0}}{\tau},w\right)+a\left(\frac{\Delta_1^{n+1}+\Delta_1^{n-1}}{2}+\Delta_2^{n},w\right)\\
     =\tau\sum_{k=1}^n(r^k,w)+\tau\sum_{k=1}^n a(s_1^k,w)~\forall w\in V_{H,1},
\end{aligned}
\end{equation}
\begin{equation}\label{2.6}
\begin{aligned}
     b\left(\frac{\delta^{n+1}-\delta^{n-1}}{\tau},w\right)-b\left(\frac{\delta^{1}-\delta^{0}}{\tau},w\right)+a\left(\Delta_1^{n}+\Delta_2^{n},w\right)\\
     =\tau\sum_{k=1}^n(r^k,w)~\forall w\in V_{H,2}.\\
\end{aligned}
\end{equation}
Taking $w=\Delta_1^{n+1}-\Delta_1^{n-1}=\tau(\delta_1^{n+1}+\delta_1^n)\in V_{H,1}^{(m)}$ in $(\ref{2.5})$, and $w=\Delta_2^{n+1}-\Delta_2^{n-1}=\tau(\delta_2^{n+1}+\delta_2^n)\in V_{H,2}^{(m)}$ in $(\ref{2.6})$, it can be inferred that
\begin{equation}\label{2.7}
\begin{aligned}
     &\|\delta_1^{n+1}\|_b^2-\|\delta_1^{n}\|_b^2+a\left(\Delta_2^n,\Delta_1^{n+1}-\Delta_1^n\right)\\
     &+a\left(\frac{\Delta_1^{n+1}+\Delta_1^{n-1}}{2},\Delta_1^{n+1}-\Delta_1^n\right)\\
     =&b(\delta_1^1-\delta_1^0,\delta_1^{n+1}+\delta_1^{n})\\
     &+\sum_{k=1}^n(r^k,\tau^2(\delta_1^{n+1}+\delta_1^{n}))+\sum_{k=1}^na(s_1^k,\tau^2(\delta_1^{n+1}+\delta_1^{n})),
\end{aligned}
\end{equation}
and
\begin{equation}\label{2.8}
\begin{aligned}
     &\|\delta_2^{n+1}\|_b^2-\|\delta_2^{n}\|_b^2+a(\Delta_1^n,\Delta_2^{n+1}-\Delta_2^n)+a\left(\Delta_2^n,\Delta_2^{n+1}-\Delta_2^n\right)\\
     =&\sum_{k=1}^n\Big(r^k,\tau^2\big(\delta_2^{n+1}+\delta_2^{n}\big)\Big)+b\Big(\delta_2^1-\delta_2^0,\delta_2^{n+1}+\delta_2^{n}\Big).
\end{aligned}
\end{equation}
Rewrite $(\ref{2.7})$ as follows
\begin{equation}\label{2.9}
\begin{aligned}
     &\|\delta_1^{n+1}\|_b^2-\|\delta_1^{n}\|_b^2++a\left(\Delta_2^n,\Delta_1^{n+1}-\Delta_1^{n-1}\right)\\
     &+a\left(\frac{\Delta_1^{n+1}-2\Delta_1^{n}+\Delta_1^{n-1}}{2}+\Delta_1^{n},\Delta_1^{n+1}-\Delta_1^{n-1}\right)\\
     =&\sum_{k=1}^n\Big(r^k,\tau^2\big(\delta_1^{n+1}+\delta_1^{n}\big)\Big)+\sum_{k=1}^na\Big(s_1^k,\tau^2\big(\delta_1^{n+1}+\delta_1^{n}\big)\Big)\\
     &+b\Big(\delta_1^1-\delta_1^0,\delta_1^{n+1}+\delta_1^{n}\Big).
\end{aligned}
\end{equation}
Summing $(\ref{2.8})$ and $(\ref{2.9})$, it can be derived
\begin{equation}\label{2.10}
\begin{aligned}
     &\|\delta_1^{n+1}\|_b^2-\|\delta_1^{n}\|_b^2+\|\delta_2^{n+1}\|_b^2-\|\delta_2^{n}\|_b^2\\
     &+a\left(\frac{\Delta_1^{n+1}-2\Delta_1^{n}+\Delta_1^{n-1}}{2},\Delta_1^{n+1}-\Delta_1^{n-1}\right)
     +a(\Delta^{n},\Delta^{n+1}-\Delta^{n-1})\\
     =&b\big(\delta_1^1-\delta_1^0,\delta_1^{n+1}+\delta_1^{n}\big)+\sum_{k=1}^n\big(r^k,\tau^2(\delta_1^{n+1}+\delta_1^{n})\big)+\sum_{k=1}^na\big(s_1^k,\tau^2(\delta_1^{n+1}+\delta_1^{n})\big)\\
     &+b\big(\delta_2^1-\delta_2^0,\delta_2^{n+1}+\delta_2^{n}\big)+\sum_{k=1}^n\big(r^k,\tau^2(\delta_2^{n+1}+\delta_2^{n})\big).
\end{aligned}
\end{equation}
For the third term on the left-hand side of $(\ref{2.10})$, it has
\begin{equation}\label{2.11}
\begin{aligned}
     &a\left(\frac{\Delta_1^{n+1}-2\Delta_1^{n}+\Delta_1^{n-1}}{2},\Delta_1^{n+1}-\Delta_1^{n-1}\right)\\
     =&\frac{1}{2}a\left(\Delta_1^{n+1}-\Delta_1^{n}-\Delta_1^{n}+\Delta_1^{n-1},\Delta_1^{n+1}-\Delta_1^{n}+\Delta_1^{n}-\Delta_1^{n-1}\right)\\
     =&\frac{1}{2}\left(\|\Delta_1^{n+1}-\Delta_1^{n}\|_a^2-\|\Delta_1^{n}-\Delta_1^{n-1}\|_a^2\right)\\
     =&\frac{\tau^2}{2}\left(\|\delta_1^{n+1}\|_a^2-\|\delta_1^{n}\|_a^2\right).
\end{aligned}
\end{equation}
Substituting $(\ref{2.11})$ into $(\ref{2.10})$, and using another telescoping sum, we have
\begin{equation}\label{2.12}
\begin{aligned}
     &\|\delta_1^{n+1}\|_b^2-\|\delta_1^{1}\|_b^2+\|\delta_2^{n+1}\|_b^2-\|\delta_2^{1}\|_b^2\\
     &+\frac{\tau^2}{2}\sum_{k=1}^N\left(\|\delta_1^{k+1}\|_a^2-\|\delta_1^{k}\|_a^2\right)
     +a\big(\Delta^{n},\Delta^{n+1}\big)\\
     =&\sum_{k=1}^n\left[b\big(\delta_1^1-\delta_1^0,\delta_1^{n+1}+\delta_1^{n}\big)+\sum_{q=1}^k\big(r^q,\tau^2(\delta_1^{n+1}+\delta_1^{n})\big)+\sum_{q=1}^ka\big(s_1^q,\tau^2(\delta_1^{n+1}+\delta_1^{n})\big)\right]\\
     &+\sum_{k=1}^n\left[b\big(\delta_2^1-\delta_2^0,\delta_2^{n+1}+\delta_2^{n}\big)+\sum_{q=1}^k\big(r^q,\tau^2(\delta_2^{n+1}+\delta_2^{n})\big)\right].
\end{aligned}
\end{equation}
Each of the terms in $(\ref{2.12})$ can be estimated. For the last term on the left-hand side, by using \eqref{eq:CFL}, it has
\begin{equation}\label{2.13}
\begin{aligned}
     &a(\Delta^{n},\Delta^{n+1})\\
     =&\frac{1}{4}\left(a(\Delta^{n+1}+\Delta^{n},\Delta^{n+1}+\Delta^{n})-a(\Delta^{n+1}-\Delta^{n},\Delta^{n+1}-\Delta^{n})\right)\\
     =&a\left(\frac{\Delta^{n+1}+\Delta^{n}}{2},\frac{\Delta^{n+1}+\Delta^{n}}{2}\right)-\frac{\tau^2}{4}a(\delta^{n+1},\delta^{n+1})\\
     \geq&\left\|\frac{\Delta^{n+1}+\Delta^{n}}{2}\right\|_a^2-\frac{1}{2}\|\delta^{n+1}\|_b^2.
\end{aligned}
\end{equation}
For the second and the fourth terms on the left-hand side of $(\ref{2.12})$, we proceed with the standard procedure with the Cauchy-Schwarz inequality to see that
\begin{equation}\label{2.14}
\begin{aligned}
     &\|\delta_1^{1}\|_b^2+\|\delta_2^{1}\|_b^2\\
     =&\|\delta_1^0\|_b^2+\|\delta_2^0\|_b^2+b\big(\delta_1^1-\delta_1^0,\delta_1^1\big)
     +b\big(\delta_1^1-\delta_1^0,\delta_1^0\big)\\
     &+b\big(\delta_2^1-\delta_2^0,\delta_2^1\big)+b\big(\delta_2^1-\delta_2^0,\delta_2^0\big).
\end{aligned}
\end{equation}
Similarly, for the terms on the right-hand side, it has
\begin{equation}\label{2.15}
\begin{aligned}
     &b\big(\delta_1^1-\delta_1^0,\delta_1^{n+1}+\delta_1^{n}\big)+b\big(\delta_2^1-\delta_2^0,\delta_2^{n+1}+\delta_2^{n}\big)\\
     \leq& 2\left\|\delta_1^1-\delta_1^0\right\|_{L^2(\Omega)}\max_{0\leq k\leq N_T}\left\|\pi(\delta_1^k)\right\|_{L^2(\Omega)}\\
     &+2\left\|\delta_2^1-\delta_2^0\right\|_{L^2(\Omega)}\max_{0\leq k\leq N_T}\left\|\pi(\delta_2^k)\right\|_{L^2(\Omega)},
\end{aligned}
\end{equation}
and
\begin{equation}\label{2.16}
\begin{aligned}
     &\sum_{k=1}^n\sum_{q=1}^k\left(r^q,\delta^{k+1}+\delta^{k}\right)_{L^2(\Omega)}\\
     \leq&\sum_{k=1}^nR_1^k\left\|\pi(\delta^{k+1}+\delta^{k})\right\|_{L^2(\Omega)}+\sum_{k=1}^nR_2^k\left\|(I-\pi)(\delta^{k+1}+\delta^{k})\right\|_{L^2(\Omega)}\\
     \leq&2\left(\sum_{k=1}^nR_1^k\right)\max_{0\leq k\leq N_T}\left\|\pi(\delta^k)\right\|_{L^2(\Omega)}\\
     &+2\left(\sum_{k=1}^nR_2^k\right)\max_{0\leq k\leq N_{T}-1}\left\|(I-\pi)\left(\frac{\delta^{k+1}+\delta^{k}}{2}\right)\right\|_{L^2(\Omega)}.
\end{aligned}
\end{equation}
Finally, the rest parts on the right-hand side can be estimated as
\begin{equation}\label{2.17}
\begin{aligned}
&\sum_{k=1}^n\sum_{q=1}^ka\left(s_1^q,\tau^2(\delta_1^{n+1}+\delta_1^{n})\right)\\
\leq&\tau^2\sum_{k=1}^nR_3^k\|\delta_1^{n+1}+\delta_1^{n}\|_a\\
\leq&2\tau^2\left(\sum_{k=1}^nR_3^k\right)\max_{0\leq k\leq N_T}\|\delta_1^{k}\|_a.
\end{aligned}
\end{equation}

By using the initial condition, we have
\begin{equation}\label{2.18}
\begin{aligned}
N_{T}\|\delta_1^1-\delta_1^0\|_{L^2(\Omega)}+N_{T}\|\delta_2^1-\delta_2^0\|_{L^2(\Omega)}\leq C(\Lambda^{-1}H^2+\tau^2).
\end{aligned}
\end{equation}
Applying the estimates on $R_1^k,~R_2^k$ and $R_3^k$, we have
\begin{equation}\label{2.19}
\begin{aligned}
\tau^2\sum_{k=1}^{N_T-1}(R_1^k+R_2^k+R_3^k)\leq C(\Lambda^{-1}H^2+\tau^2).
\end{aligned}
\end{equation}
Using the triangle inequality with $\delta^0=\eta^0-\epsilon^0$, we have
\begin{equation}\label{2.20}
\begin{aligned}
&\max_{0\leq k\leq N_T}(\|\delta_1^k\|^2_{L^2(\Omega)}+\|\delta_2^k\|^2_{L^2(\Omega)})\\
&\leq C\left(\|\epsilon^0\|_{L^2(\Omega)}+\max_{0\leq k\leq N_T}\left(\|\eta_1^k\|_{L^2(\Omega)}+\|\eta_2^k\|_{L^2(\Omega)}\right)+\Lambda^{-1}H^2+\tau^2\right).
\end{aligned}
\end{equation}
Using another triangle inequality with $\delta^k=\eta^k-\epsilon^k$, we have
\begin{equation}\label{2.21}
\begin{aligned}
&\max_{0\leq k\leq N_T}\|\epsilon^k\|^2_{L^2(\Omega)}\\
&\leq C\left(\|\epsilon^0\|_{L^2(\Omega)}+\max_{0\leq k\leq N_T}\left(\|\eta_1^k\|_{L^2(\Omega)}+\|\eta_2^k\|_{L^2(\Omega)}\right)+\Lambda^{-1}H^2+\tau^2\right).
\end{aligned}
\end{equation}
Since $$\|\epsilon^0\|_{L^2(\Omega)}\leq C(\|\eta_1^0\|_{L^2(\Omega)}+\|\eta_2^0\|_{L^2(\Omega)} )\leq C\Lambda^{-1}H^2,$$
the proof can be completed.
\end{proof}
% \bibliographystyle{plain}
% \bibliography{references}
\bibliography{ref}

\end{document}